\newtheorem{prop}{Proposition}
\newtheorem{lem}{Lemma}
\newtheorem{thm}{Theorem}
\newtheorem{conj}{Conjecture}
\theoremstyle{remark}
\theoremstyle{definition}
\newtheorem{defi}{Definition}
\begin{document}

\title{Projective Self-dual polygons in higher dimensions}
\author{Ana C. Chavez Caliz}

\maketitle

\begin{abstract}
Motivated by a question from V. Arnold about self-dual curves in projective spaces, we study ${\cal M}_{m,n,k}$: the moduli space of $m$-self-dual $n$-gons in $\mathbb{P}^k$. This paper lays out an explicit construction of self-dual polygons, and for specific cases of $n$ and $m$, provides the dimension of ${\cal M}_{m,n,k}$. We include a conjecture about the Pentagram map in higher dimensions that generalizes Clebsch's theorem, which states that every pentagon in $\mathbb{RP}^2$ is invariant under the Pentagram map.\\
\end{abstract}

\tableofcontents

\section{Introduction}
	In his book ``\textit{Arnold's problems}'' \cite{arnold}, Vladimir Arnold shares a collection of questions formulated at the beginning of his seminars organized during his time in Moscow and Paris for over $40$ years. One of these problems, stated in $1994$, goes as follows:

	\begin{center}
		\textbf{1994-17.} Find all projective curves projectively equivalent to their duals. \textit{The answer seems to be unknown even in $\mathbb{RP}^2$}.
	\end{center}

	Given a vector space $V$, the \emph{\textbf{projective space}}\footnote[2]{Throughout the text, we will also denote projective spaces as $\mathbb{P}^k$ to emphazise the dimension of the space: $k$.} $\mathbb{P}$ consists of all $1$-dimensional subspaces of $V$. The projective dual space $\mathbb{P}^*$ is then the projectivization of $V^*$, where $V^*$ is the vector space given by all linear functions over $V$. To each subspace in $V$, we can associate it to its annihilator in $V^*$. In particular, this association provides a one-to-one correspondence between hyperplanes in $\mathbb{P}$ and points in $\mathbb{P}^*$.\\

	In the case of polygons, a \emph{\textbf{closed $\boldsymbol{n}$-gon}} $P$ in a projective space of dimension $k$ is a sequence of vertices $A_1, A_3, A_5, \ldots \in \mathbb{P}^k$ such that $A_i = A_{i+2n}$ for all $i$. Its \emph{\textbf{dual polygon}} $P^*$ is defined as the sequence of vertices $B_k^*, B_{k+2}^* , B_{k+4}^*, \ldots \in (\mathbb{P}^k)^*$, where 
$$B_{i} := \text{span} \{A_{i-(k-1)}, A_{i-(k-3)}, \ldots ,A_{i+(k-3)}, A_{i+(k-1)} \}.$$

	Given $m \in \mathbb{Z}$, an $n$-gon $P=(A_1, \ldots , A_{2n-1})$ and its dual polygon $P^* = (B_k^*, \ldots , B_{k +2(n-1)}^*)$, $P$ is called \emph{\textbf{$\boldsymbol{m}$-self-dual}} if there is a projective transformation $\hat{f} : \mathbb{P}^k \rightarrow (\mathbb{P}^k)^*$ such that $\hat{f}(A_i) = B_{i+m}^*$ for all $i$. The \emph{\textbf{moduli space of $\boldsymbol{m}$-self-dual $\boldsymbol{n}$-gons in $\boldsymbol{\mathbb{P}^k}$}} denoted as $\boldsymbol{{\cal M}_{m,n,k}}$ is the set of all $m$-self-dual $n$-gons in $\mathbb{P}^k$, under the action of $\text{PGL}(V)$: two $n$-gons $P = (A_1, A_3, \ldots, A_{2n-1})$, $P'= (A'_1, A'_3, \ldots , A'_{2n-1})$ belong to the same equivalent class if there is a projective transformation $\psi : \mathbb{P}^k \rightarrow \mathbb{P}^k$ such that $\psi(A_i)=A'_i$ for all $i$.\\

Thus, the discrete version of Arnold's question for closed curves becomes:

	\begin{center}
		\textit{Find all $m$-self-dual projective polygons.}
	\end{center}

	In \cite{selfdual}, D. Fuchs and S. Tabachnikov study the space of $m$-self-dual polygons in a 2-dimensional projective space (either $\mathbb{RP}^2$ or $\mathbb{CP}^2$). Theorem 1 of \cite{selfdual} states that:

\begin{thm}[D. Fuchs, S. Tabachnikov]\label{Fuchs-Tabach}
If $(m,n)=1$ then ${\cal M}_{m,n,2}$ consists of one point, the class of a regular $n$-gon. If $m\leq n$, $(m,n)> 1$ and $n \neq 2m$ then $\dim {\cal M}_{m,n,2}=(m,n)-1$. Finally, $\dim {\cal M}_{m,2m,2} = m-3$ and $\dim {\cal M}_{n,n,2}=n-3$.
\end{thm}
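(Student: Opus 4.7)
The plan is to reformulate $m$-self-duality in $\mathbb{P}^2$ as a linear system on a nondegenerate bilinear form associated to a lifted polygon, and to carry out a dimension count of $\mathcal{M}_{m,n,2}$ governed by the orbit structure of the shift $i \mapsto i+m$ on $\mathbb{Z}/n\mathbb{Z}$.

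First I would pick homogeneous lifts $V_i \in \mathbb{R}^3$ of the vertices $A_i$ subject to the normalization $\det(V_{i-2}, V_i, V_{i+2}) = 1$ for all $i$. This normalization forces the unique linear relation among four consecutive vectors $V_i, V_{i+2}, V_{i+4}, V_{i+6}$ to take the form
$$V_{i+6} = a_i V_{i+4} + b_i V_{i+2} + V_i,$$
so that the polygon is encoded by the $2n$ periodic coefficients $(a_i, b_i)$ together with an eight-dimensional closedness (monodromy) condition. Modulo $\text{PGL}(3, \mathbb{R})$ this recovers $\dim\{\text{generic }n\text{-gons in }\mathbb{P}^2\} = 2n - 8$. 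A projective isomorphism $\hat f : \mathbb{P}^2 \to (\mathbb{P}^2)^*$ lifts to a nondegenerate bilinear form $\omega$ on $\mathbb{R}^3$, and the condition $\hat f(A_i) = B^*_{i+m}$, with $B_{i+m} = \text{span}\{A_{i+m-1}, A_{i+m+1}\}$, becomes the linear system
$$\omega(V_i, V_{i+m-1}) = 0, \qquad \omega(V_i, V_{i+m+1}) = 0,$$
consisting of $2n$ equations on the $9$-dimensional space of bilinear forms.

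The moduli dimension is then controlled by the rank of this system: $\omega$ is determined up to scalar by a nontrivial kernel, and existence of a \emph{nondegenerate} solution is a Zariski-open condition on that kernel. By standard determinantal codimension, the locus where the $2n \times 9$ matrix has rank $\leq 8$ has expected codimension $2n - 8$ in the space of polygons, giving the naive dimension zero. The key point is that the shift $i \mapsto i + m$ on the cyclic vertex set has $d = \gcd(m, n)$ orbits of length $n/d$, and each orbit contributes a linear identity among the defining equations coming from the cyclic product of recurrence coefficients along the orbit; these identities reduce the effective rank by exactly $d - 1$. When $d = 1$ the single orbit links everything through the recurrence, the reduction is zero, and the induced cyclic symmetry forces the polygon to be the regular $n$-gon. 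When $d > 1$, each of the $d - 1$ independent compatibility parameters can be deformed freely, yielding $\dim \mathcal{M}_{m, n, 2} = d - 1$.

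The boundary cases $n = 2m$ and $m = n$ demand extra care because the shift acquires an additional finite-order symmetry. When $n = 2m$ the shift by $m$ is an involution, which forces $\omega$ to be symmetric (the self-duality is realized by a conic); this extra symmetry of $\omega$ cuts the naive count by two, giving $m - 3$. When $m = n$ the shift is trivial, each vertex is polar to the line through its neighbors with respect to $\omega$, and a direct parameterization of such ``polar'' polygons yields $n - 3$. The main obstacle, in my view, is the rigorous rank analysis of the $2n\times 9$ matrix in the second step: showing that the orbit-sum identities are the only redundancies and that the resulting parameters can be independently prescribed without losing nondegeneracy of $\omega$ or violating the monodromy condition on the polygon. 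I expect this is cleanest after restricting attention to a single orbit of the shift and using the $n/d$-periodicity of the induced subsystem to obtain an explicit rank formula; the rest of the argument then reduces to careful bookkeeping of the finite symmetries in the exceptional cases.
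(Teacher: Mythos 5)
There is a genuine gap. First, note that this theorem is not proved in the paper at all: it is quoted from Fuchs--Tabachnikov \cite{selfdual}, and the paper's own machinery (Lemmas \ref{rotation} and \ref{classify}, Propositions \ref{n=m-form}, \ref{freedom} and Lemma \ref{restriction-r2}) recovers it by specializing $k=2$. Your setup --- lifting to $\mathbb{R}^3$, encoding self-duality as the $2n$ linear conditions $\omega(V_i,V_{i+m\pm1})=0$ on the $9$-dimensional space of bilinear forms --- is the right starting point and matches the paper's equation (\ref{perpendicular}). But the heart of your argument, the claim that the $\gcd(m,n)=d$ orbits of the shift produce ``linear identities coming from the cyclic product of recurrence coefficients'' that cut the rank by exactly $d-1$, is asserted rather than derived; you give no reason why $d$ orbits should yield $d-1$ (rather than $d$, or $0$) independent relations, and you acknowledge this yourself. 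The missing idea is the projective transformation $G=F^{-1}F^{\mathsmaller T}$ of Definition \ref{g}: Lemma \ref{rotation} shows $G(A_i)=A_{i+2m}$, so the polygon is generated from $d$ seed vertices by a finite-order map, and $\dim{\cal M}_{m,n,2}$ is computed as (freedom in choosing the seeds, Proposition \ref{freedom}) minus (dimension of the stabilizer of $F$, which is $1$-dimensional for the generic normal form $H_2(e^{i\theta})\oplus I_1$), giving $d-1$ directly. Without constructing $G$, the ``induced cyclic symmetry'' you invoke to force regularity when $d=1$ has no source, and the expected-codimension heuristic for the determinantal locus is not a proof.

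Second, your treatment of $n=2m$ contains a concrete error: you claim the involutivity of the shift forces $\omega$ to be symmetric, i.e.\ that the duality is a polarity in a conic. This contradicts Proposition \ref{n=m-form}: symmetry of $F$ implies $A_i=A_{i+2m}$ for all $i$, which for a simple polygon forces $m=n$, not $n=2m$. The correct normal form for $n=2m$, $k=2$ is $F=\Omega_2\oplus I_1$ (genuinely non-symmetric), whose stabilizer $\mathrm{Sp}(2)\times \mathrm{O}(1)$ has dimension $3$; subtracting this from the $m$ degrees of freedom of the seed vertices gives $m-3$. So your numerical answer is right but the mechanism (``extra symmetry cuts the count by two'') is wrong. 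The case $m=n$ is the one where $F$ \emph{is} symmetric, and there the paper's proof of Theorem \ref{m=n} with $k=2$ gives $n-3$ by the same freedom-minus-stabilizer count with stabilizer $\mathrm{O}(3,\mathbb{C})$; your ``direct parameterization'' of polar polygons would need to be spelled out to that level of detail to count as a proof.
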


The work done in \cite{selfdual} includes, among other results, the following two:
	\begin{enumerate}
	\item Any pentagon in $\mathbb{P}^2$ is $5$-self-dual (which is the smallest\footnote[3]{Smallest number of vertices.} interesting case as all triangles are projectively equivalent in $\mathbb{P}^2$, and similarly for quadrilaterals),
	\item If $n$ is odd, and $P$ is an $n$-gon inscribed in a conic and circumscribed about a conic (that is, $P$ is a Poncelet polygon), then $P$ is $n$-self-dual.
	\end{enumerate}

	In addition, \cite{selfdual} extends some of the results for closed polygons to closed curves, and provides examples of self-dual curves (like Radon curves) together with explicit formulas.\\

The next natural step is to generalize the results in \cite{selfdual} to higher-dimensional projective spaces. For instance, one can ask:

	\begin{enumerate}
	\item What is the dimension of ${\cal M}_{m,n,k}$?
	\item Are $(k+3)$-gons $(k+3)$-self-dual in $\mathbb{P}^k$?
	\item If $P$ is an $n$-gon inscribed in a quadric and circumscribed to a quadric, is $P$ $n$-self-dual?
	\item Are periodic billiard trajectories\footnote[9]{When the dimension is 2, these correspond precisely to Poncelet polygons.} in ellipsoids self-dual? 
	\item What can we say about the continuous case, in particular for closed self-dual curves?
	\end{enumerate}
	
	Most of this paper will focus on studying $m$-self-dual $n$-gons in $\mathbb{CP}^k$. The space of $n$-gons in $\mathbb{RP}^k$ have been extensively studied (\cite{projective-gale}, \cite{gale-original}, \cite{geometry-grassmannians}, \cite{chow-quotients} to mention just a few). Polygons in projective spaces are connected to Linear Difference equations and Frieze Patterns (\cite{gale}). \\

	The scenario in higher dimensions is more intricate than the two-dimensional case, as the classification of non-degenerate bilinear forms in arbitrary dimensions increases in complexity. However, we were able to obtain partial answers to some of the questions posted above. 

\begin{thm} \label{(m,n)=1}
If $(m,n)=1$, then there are finitely-many $m$-self-dual $n$-gons in $\mathbb{CP}^k$, and so $\dim {\cal M}_{m,n,k}= 0$. These polygons correspond to the projective equivalence class of regular $n$-gons \footnote[4]{Following H. S. M. Coxeter's definition (\cite{coxeter}) a polygon $P=(A_1, \ldots ,A_{2n-1})$ in an Euclidean space $\mathbb{E}^k$ is \emph{\textbf{regular}} if there is an isometry $S$ of $\mathbb{E}^k$ such that $A_{2\ell+1} = S^{\ell}(A_1)$. Since every projective space $\mathbb{RP}^k$ contains an affine plane $\mathbb{E}^k$, it makes sense to talk about the class of regular $n$-gons in $\mathbb{RP}^k$. Figure \ref{regular} shows two examples of regular polygons.} in $\mathbb{C}^k$. 
\end{thm}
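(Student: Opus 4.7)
The plan is to extract from the $m$-self-dual structure a projective automorphism of $P$ that cycles through all vertices, and then use $(m,n)=1$ together with diagonalization to reduce the moduli to a finite set; a Fourier-type argument then collapses these candidates onto the regular $n$-gon class.

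\textbf{Step 1 (construct the automorphism).} Let $V=\mathbb{C}^{k+1}$, so $\mathbb{P}^k=\mathbb{P}(V)$, and choose lifts $v_i\in V$ of the vertices $A_i$. The projective duality $\hat f$ comes (up to scalar) from a non-degenerate bilinear form $\beta$ on $V$, and unwinding the definition of the dual polygon turns $m$-self-duality into the combinatorial condition
$$\beta(v_i,v_j)=0\ \iff\ j-i-m\in T,\qquad T:=\{-(k-1),-(k-3),\dots,k-3,k-1\}.$$
Since $T=-T$, the transposed form $\beta^{T}(u,v):=\beta(v,u)$ is also non-degenerate. Define a linear map $\Phi\colon V\to V$ by $\beta(\Phi u,v)=\beta(v,u)$ for all $u,v\in V$. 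Substituting $u=v_i$ shows that $\beta(\Phi v_i,v_j)$ and $\beta(v_{i-2m},v_j)$ vanish on the same set of indices $j$; together with the non-degeneracy of $\beta$ and mild genericity of the lifts, this forces $\Phi(v_i)=c_i\,v_{i-2m}$ for scalars $c_i\in\mathbb{C}^\times$, so projectively $\Phi(A_i)=A_{i-2m}$.

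\textbf{Step 2 (cyclic action and finite classification).} When $(m,n)=1$ the shift $i\mapsto i-2m$ acts on the odd residues modulo $2n$ as a single $n$-cycle, so the vertices of $P$ form a single $\Phi$-orbit and $\Phi^n$ fixes every vertex; hence $\Phi^n$ is a scalar on $V$, and one rescales so that $\Phi^n=I$. Then $\Phi$ is semisimple with eigenvalues $\zeta^{a_1},\dots,\zeta^{a_{k+1}}$ (where $\zeta=e^{2\pi i/n}$), and the $a_s\in\mathbb{Z}/n\mathbb{Z}$ are pairwise distinct since $P$ spans $\mathbb{P}^k$. In an eigenbasis $e_1,\dots,e_{k+1}$, normalize via $\mathrm{PGL}(V)$ so that $v_1=e_1+\cdots+e_{k+1}$; then
$$v_{1-2mj}=\Phi^j(v_1)=\sum_{s}\zeta^{ja_s}\,e_s,\qquad j=0,1,\dots,n-1.$$
The projective class of $P$ therefore depends only on the unordered tuple $\{a_1,\dots,a_{k+1}\}\subset\mathbb{Z}/n\mathbb{Z}$, a finite list of possibilities, so $\dim\mathcal M_{m,n,k}=0$.

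\textbf{Step 3 (identifying the regular class, and the main obstacle).} To collapse all of these candidates to the regular $n$-gon, use $\beta$ once more. Applying its intertwining relation with $\Phi$ to a pair $e_s,e_t$ gives $\beta(e_s,e_t)=0$ unless $a_s+a_t\equiv 0\pmod n$, so $\{a_s\}$ is closed under negation. The pairing
$$D(\ell):=\beta(v_1,v_{1-2m\ell})=\sum_{s} B_s\,\zeta^{a_s\ell}$$
is then a Fourier polynomial on $\mathbb{Z}/n\mathbb{Z}$ with at most $k+1$ frequencies that vanishes at exactly $k$ prescribed values of $\ell$. A Chebotarev-type uncertainty argument identifies $\{a_s\}$ from its zero set, forcing it to be $k+1$ consecutive residues symmetric about $0$ --- precisely the eigenvalue tuple of the regular $n$-gon on a rational normal curve in $\mathbb{P}^k$. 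The main obstacle lies here: recovering the frequency set from the zeros is immediate when $n$ is prime (Chebotarev's theorem), but for composite $n$ one must combine the negation-symmetry of $\{a_s\}$ with the explicit structure of $T$ to close the argument; a subsidiary technicality is the genericity assumption used in Step 1, which needs a limit argument or direct verification for degenerate configurations. A cleaner bypass of Step 3 is to exhibit the regular $n$-gon as $m$-self-dual for every $m$ with $(m,n)=1$ and combine this with the finiteness from Step 2.
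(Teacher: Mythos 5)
Your Steps 1 and 2 are essentially the paper's own argument. The map $\Phi$ is (up to inversion and transposition) the paper's $G=F^{-1}F^{\mathsmaller T}$ from Definition \ref{g}, your identification $\Phi(A_i)=A_{i-2m}$ is Lemma \ref{rotation}, and the single-orbit observation under $(m,n)=1$ is exactly how the paper deduces Theorem \ref{(m,n)=1}; where the paper gets finiteness of the possible forms from the Horn--Sergeichuk congruence classification (Lemma \ref{classify}), you get it by diagonalizing $\Phi$ with $\Phi^n=I$, which amounts to the same thing. Two remarks. First, the ``mild genericity of the lifts'' you invoke in Step 1 is not needed: you only use the implication $j\in i-m+T\Rightarrow\beta(v_{i-2m},v_j)=0$, so both $\Phi v_i$ and $v_{i-2m}$ are $\beta$-perpendicular to the hyperplane spanned by the $k$ consecutive vertices indexed by $i-m+T$, and non-degeneracy of $\beta$ plus the standing assumption that any $k+1$ consecutive vertices are independent already forces proportionality; no reverse implication or limit argument is required. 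Second, and more importantly, your Step 3 is aimed at a claim that is both stronger than the theorem and false in general: the theorem asserts finitely many classes, all \emph{regular} in Coxeter's sense (an orbit of one point under powers of a single transformation), not a single class --- the paper's Figure \ref{regular} exhibits two projectively inequivalent regular self-dual polygons in $\mathbb{R}^3$, corresponding to different eigenvalue tuples $\{a_s\}$. Your Step 2 already displays $P$ as the orbit of $A_1$ under the cyclic group generated by the projective transformation induced by $\Phi$, which is precisely the regularity statement being claimed; so the Chebotarev-type uncertainty argument, and the obstacle you run into for composite $n$, are artifacts of trying to prove uniqueness of the frequency set rather than a gap in what the theorem actually requires. (Your proposed ``bypass'' --- one regular example plus finiteness --- would not by itself show that \emph{every} class is regular, but again Step 2 already gives that.)
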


	\begin{center}
	\includegraphics[scale=0.38]{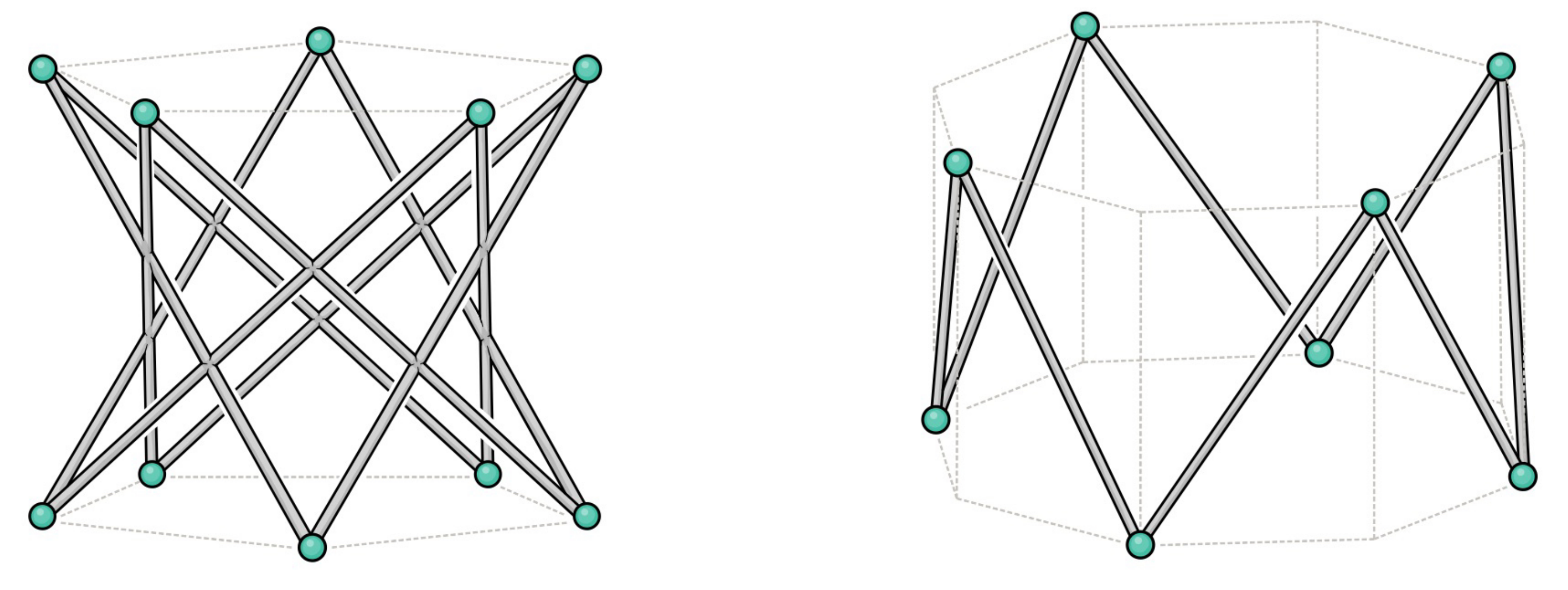}
	\figcaption{Examples of regular polygons in $\mathbb{R}^3$, which are $m$-self-dual, for any $m$. A 3-dimensional model for each polygon, from left to right, can be found using the following links: \url{https://www.geogebra.org/3d/pejx6wdj} and \url{https://www.geogebra.org/3d/tx3rqkrg}.}
	\label{regular}
	\end{center}

\begin{thm} \label{m=n} If $n\equiv k+1 \pmod 2$ then 
$\dim {\cal M}_{n,n,k} = \dfrac{k(n-k-1)}{2}$.\\
\end{thm}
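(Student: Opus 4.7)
The plan is to parametrize $n$-self-dual $n$-gons by pairs $(P, \omega)$, where $\omega$ is a non-degenerate bilinear form on $V = \mathbb{C}^{k+1}$ realizing the duality map $\hat{f}: V \to V^*$, and then count dimensions modulo the action of $\text{PGL}(V)$. Lifting the vertices $A_i$ to vectors $V_i \in V$, the self-duality $\hat{f}(A_i) = B_{i+n}^*$ becomes the orthogonality condition
$$\omega(V_i, V_j) = 0 \quad \text{whenever } j - i \equiv n + r \pmod{2n}, \; r \in \{\pm 1, \pm 3, \ldots, \pm(k-1)\}.$$
The parity hypothesis $n \equiv k+1 \pmod 2$ is precisely what makes these indices land on valid vertex positions.

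Next I would observe that the index-pair set is swap-symmetric: if $j - i \equiv n + r$, then $i - j \equiv -n - r \equiv n - r \pmod{2n}$, and $-r$ lies in the same range. The decomposition $\omega = \omega_s + \omega_a$ into symmetric and antisymmetric parts therefore yields $nk/2$ unordered orthogonality conditions on each (an integer by the parity hypothesis). Since $\dim\{\omega_a\} = k(k+1)/2 \leq nk/2$ with equality only at $n = k+1$, the antisymmetric system is overdetermined and generically forces $\omega_a = 0$. Thus the largest-dimensional component of ${\cal M}_{n,n,k}$ corresponds to a symmetric $\omega = \omega_s$. Over $\mathbb{C}$, $\text{PGL}(V)$ acts transitively on non-degenerate symmetric bilinear forms, so I fix $\omega_s$ to a standard form and examine the variety
$$N = \{(V_1, V_3, \ldots, V_{2n-1}) \in (\mathbb{P}^k)^n \;:\; \omega_s(V_i, V_j) = 0 \text{ for every prescribed pair}\}.$$
If the $nk/2$ conditions are transverse, then $\dim N = nk - nk/2 = nk/2$; quotienting by the stabilizer $\text{PO}(k+1, \mathbb{C})$ of $\omega_s$, of dimension $k(k+1)/2$, gives
$$\dim {\cal M}_{n,n,k} = \frac{nk}{2} - \frac{k(k+1)}{2} = \frac{k(n-k-1)}{2}.$$

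The hard part will be verifying transversality of the $nk/2$ bilinear orthogonality conditions in $N$, i.e., confirming that they cut out the expected codimension. My plan is to exhibit an explicit polygon $P_0 \in N$—for instance, a regular $n$-gon from Theorem \ref{(m,n)=1}, which is $m$-self-dual for every $m$ and in particular admits a symmetric $n$-self-duality witness—and to compute the rank of the Jacobian of the orthogonality system at $P_0$, checking it equals $nk/2$. Alternatively, the explicit construction of self-dual polygons alluded to in the abstract should produce a smooth family of $n$-self-dual polygons of the expected dimension, providing a matching lower bound. Generic finiteness of the $\text{PO}(k+1, \mathbb{C})$-stabilizer on $N$, which is automatic for polygons in general position, then completes the argument.
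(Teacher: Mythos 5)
Your overall strategy is the same as the paper's --- associate a bilinear form to the self-duality, reduce to the symmetric case, count $nk/2$ vertex parameters, and subtract $\dim O(k+1,\mathbb{C})=k(k+1)/2$ --- but two steps have genuine gaps. The first is the reduction to symmetric $\omega$. Your claim that the antisymmetric system is ``overdetermined and generically forces $\omega_a=0$'' concerns a generic polygon in the full configuration space, whereas every polygon in question lies in the (already non-generic) self-dual locus; knowing that a generic $P$ admits no nonzero antisymmetric solution tells you nothing about the relative dimensions of the locus of polygons admitting a non-symmetric duality versus a symmetric one, so you have not excluded a larger component with $\omega_a\neq 0$. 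The paper closes this by showing the form is \emph{always} symmetric when $m=n$: the projective map $G=F^{-1}F^{T}$ sends $A_i$ to $A_{i+2m}=A_i$ (Lemma \ref{rotation}), hence fixes $n\geq k+3$ points and is the identity, and the canonical-form classification of Lemma \ref{classify} with $r=1$ then forces $F\cong I_{k+1}$, ruling out any symplectic summand (note that an antisymmetric $F$ would also give $G=\mathrm{Id}$ projectively, so the classification, not just $G=\mathrm{Id}$, is needed here).

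The second gap is transversality of the $nk/2$ orthogonality conditions, which you explicitly defer. The paper never invokes transversality: it orders the vertices relative to the hyperplane $A_1^{\perp}$ and builds the polygon incrementally --- the first $(n-k+1)/2$ vertices are free ($k$ parameters each), the next $k$ vertices lie in the intersections $\bigcap_{j\leq\ell}A_{2j-1}^{\perp}$ of already-chosen hyperplanes (contributing $(k-1)+(k-2)+\cdots+1+0$ parameters), and the final $(n-k-1)/2$ vertices are completely determined. This constructive count gives $nk/2$ directly and simultaneously supplies the existence/lower-bound argument you were hoping to extract from a Jacobian computation at the regular polygon. As written, your proposal computes the expected dimension but does not yet establish that it is the actual one.
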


	\begin{center}
	\begin{minipage}{7cm}
		\includegraphics[scale=0.33]{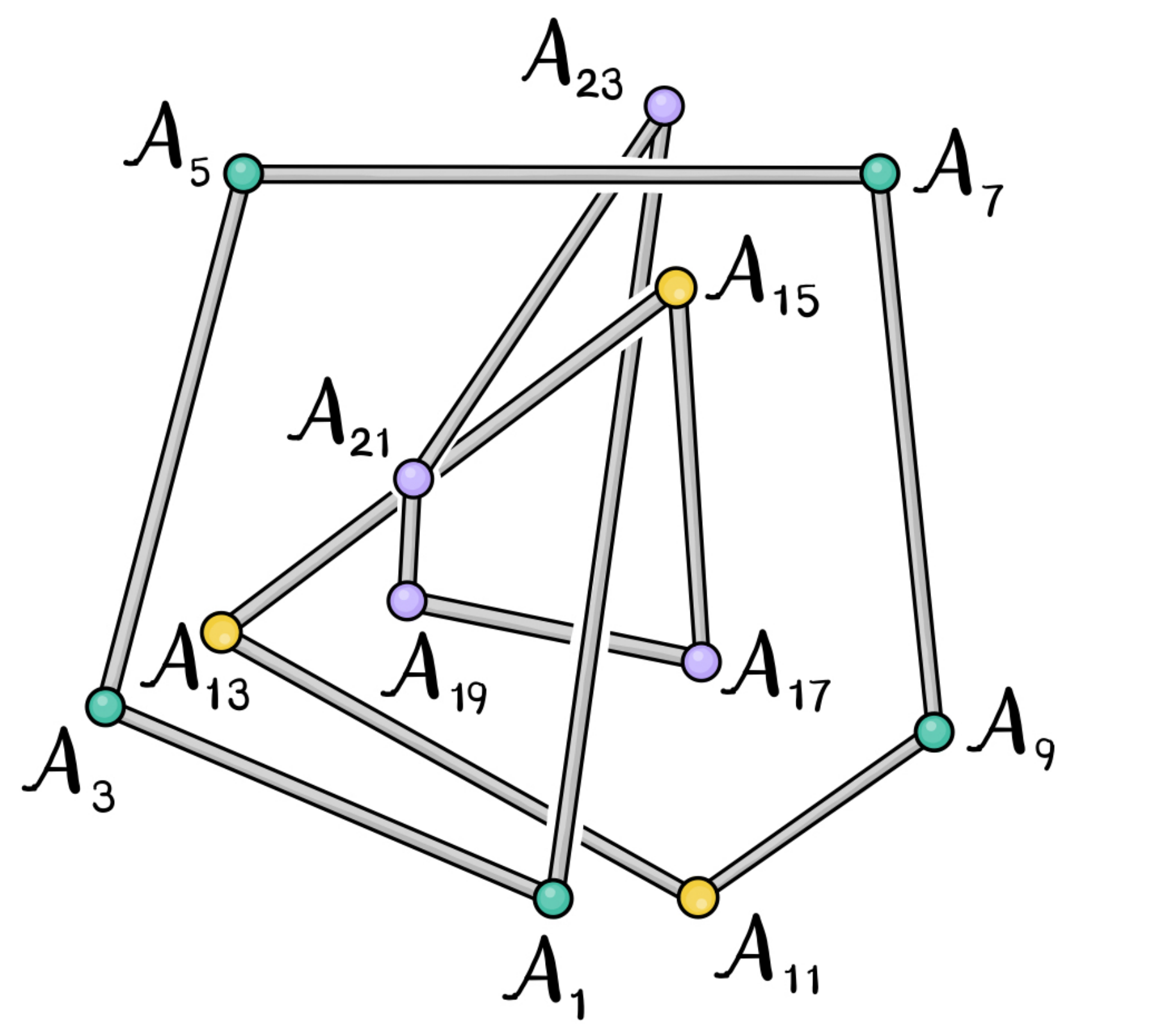}	
	\end{minipage}
	\begin{minipage}{7cm}
		\figcaption{Example of a $12$-self-dual $12$-gon in $\mathbb{RP}^3$. The dimension of ${\cal M}_{12,12,3}$ is given by Theorem \ref{m=n}. A $3$-dimensional model can be found here: \url{https://www.geogebra.org/m/pfzr3trx}.}
	\end{minipage}
	\label{m=n-example}
	\end{center}

\begin{thm} \label{r=2}
The moduli space of $m$-self-dual $2m$-gons in $\mathbb{CP}^k$ consists of $\left\lfloor \frac{k}{2} \right\rfloor$ components $${\cal M}_{m,2m,k}= \bigsqcup_{f=1}^{\lfloor k/2 \rfloor} {\cal M}_f.$$ 

\begin{itemize}
	\item If $k<m$, then for every $1\leq f \leq \lfloor k/2 \rfloor$ $$\dim {\cal M}_f = \dfrac{mk}{2} - 4f^2 + 2kf - {k+1 \choose 2}.$$   
	The dimension of the biggest of all these components, ${\cal M}_{f_0}$, depends on the congruence class of $k \pmod 4$ as follows:
		\begin{itemize}
		\item[$\circ$] If $k\equiv 0 \pmod 4$, then $f_0=k/4$, and $\dim {\cal M}_{f_0}= \dfrac{k(n-k-2)}{4},$
		\item[$\circ$] If $k\equiv \pm 1 \pmod 4$, then $f_0=\frac{k\pm 1}{4}$, and $\dim {\cal M}_{f_0}= \dfrac{nk-(k+1)^2}{4},$\\
		\item[$\circ$] If $k\equiv 2 \pmod 4$, then $f_0=\frac{k\pm 2}{4}$, and $\dim {\cal M}_{f_0}= \dfrac{nk-(k^2+2k+4)}{4}.$\\
		\end{itemize}

	\item If $k>m$, then for every $1\leq f \leq \lfloor k/2 \rfloor$ $$\dim {\cal M}_f = \dfrac{m(k+2)}{2} - 4f^2 + 2kf - {k+1 \choose 2}.$$   
	The dimension of the biggest of all these components, ${\cal M}_{f_0}$, depends on the congruence class of $k \pmod 4$ as follows:
		\begin{itemize}
		\item[$\circ$] If $k\equiv 0 \pmod 4$, then $f_0=k/4$, and $\dim {\cal M}_{f_0}= \dfrac{(k+2)(n-k)}{4},$
		\item[$\circ$] If $k\equiv \pm 1 \pmod 4$, then $f_0=\frac{k\pm 1}{4}$, and $\dim {\cal M}_{f_0}= \dfrac{n(k+2)-(k+1)^2}{4},$\\
		\item[$\circ$] If $k\equiv 2 \pmod 4$, then $f_0=\frac{k\pm 2}{4}$, and $\dim {\cal M}_{f_0}= \dfrac{n(k+2)-(k^2+2k+4)}{4}.$
		\end{itemize}
\end{itemize}
\end{thm}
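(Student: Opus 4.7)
The plan is to translate the self-duality condition into an incidence problem in linear algebra. I would lift $P = (A_1, A_3, \ldots, A_{2n-1})$ from $\mathbb{CP}^k$ to a periodic sequence of representatives $a_i \in V$, where $\dim V = k+1$. Under such a lift, an $m$-self-duality $\hat f$ corresponds to a non-degenerate bilinear form $\omega$ on $V$ satisfying
\begin{equation*}
\omega(a_i, a_j) = 0 \quad \text{whenever} \quad j - i \in S := \{m - (k-1), m - (k-3), \ldots, m + (k-1)\},
\end{equation*}
since $F(a_i) := \omega(a_i, \cdot)$ must annihilate the hyperplane $B_{i+m}$. The moduli space ${\cal M}_{m, 2m, k}$ is then the quotient of the incidence variety $\{((a_i), \omega)\}$ by the joint action of $\mathrm{GL}(V)$, the per-vertex rescalings $a_i \mapsto \lambda_i a_i$, and the overall rescaling of $\omega$.

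The key observation for $n = 2m$ is that $S$ is preserved under negation modulo $n$: if $j - i \equiv m + s \pmod{2m}$ with $s \in \{-(k-1), \ldots, k-1\}$, then $i - j \equiv m - s \pmod{2m}$ is also in $S$. Therefore the space $\Omega(P)$ of bilinear forms compatible with $P$ is closed under transpose, and decomposes as $\Omega(P) = \Omega_s(P) \oplus \Omega_a(P)$ into symmetric and skew-symmetric parts. I expect the labelling invariant $f$ to arise from the rank structure of the generic element of $\Omega_a(P)$, with the range $1 \le f \le \lfloor k/2 \rfloor$ matching the possible even ranks of a skew form on $V$.

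With the stratification in hand, $\dim {\cal M}_f$ is obtained by counting the solutions of $\omega(a_i, a_j) = 0$ as a linear system in the joint unknowns $((a_i), \omega)$, and subtracting the dimensions of the group actions above. The two regimes $k < m$ and $k > m$ arise because the $k$ offsets in $S$ lie in a single period of $\mathbb{Z}/2m\mathbb{Z}$ in the former case but wrap around in the latter, which is precisely what produces the $mk/2$ versus $m(k+2)/2$ leading term in the statement. The quadratic shape $-4f^2 + 2kf$ in $f$ comes from a Grassmannian-type dimension count of rank-$2f$ skew forms on an auxiliary $k$-dimensional space, while $\binom{k+1}{2}$ records the redundancy coming from the symmetric quotient of $\mathfrak{gl}(V)$. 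Maximizing the concave quadratic over integer $f$ then yields the four sub-cases indexed by $k \pmod 4$ in the statement.

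The main obstacle will be showing that the rank parameter $f$ is a genuine \emph{intrinsic} deformation invariant of $P$, so that distinct values of $f$ label disconnected components of ${\cal M}_{m, 2m, k}$ rather than Whitney strata of a single connected variety. I would address this by constructing, invariantly from $P$ and independently of any particular choice of $\omega$, a canonical multilinear tensor on $V$ whose rank equals $f$; its continuous dependence on $P$ then forces $f$ to be locally constant on the moduli space, yielding the claimed decomposition into $\lfloor k/2 \rfloor$ components. After this, the dimension count itself is a finite combinatorial exercise in tallying the periodic vanishing conditions, which I expect to run uniformly in both regimes $k<m$ and $k>m$.
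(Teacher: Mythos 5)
Your overall architecture (attach a bilinear form to the self-duality, stratify by a discrete rank invariant $f$, and compute $\dim {\cal M}_f$ as vertex freedom minus the dimension of the group preserving the form) is the same as the paper's, but the ``key observation'' on which your stratification rests is false. The incidence set $S=\{m-(k-1),\ldots,m+(k-1)\}$ lives modulo $2n=4m$ (the vertices satisfy $A_i=A_{i+2n}$), not modulo $2m$: if $j-i\equiv m+s$, then $i-j\equiv -m-s\equiv 3m-s \pmod{4m}$, which does not lie in $S$ (for $k\le m$ the windows centered at $m$ and at $3m$ are disjoint; for $k>m$ they overlap but are still unequal). So the space of compatible forms is \emph{not} closed under transpose, and indeed it cannot be: since $n\ge k+3$, a non-degenerate compatible form is unique up to scalar, so transpose-closure would force $F^{\mathsmaller T}=\pm F$, whereas the correct canonical form for $n=2m$ is $F\cong \Omega_{2f}\oplus I_{k+1-2f}$, which is neither symmetric nor skew when $0<2f<k+1$ (and Proposition \ref{n=m-form} shows symmetry occurs only for $m=n$). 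The invariant $f$ is half the rank of the skew part $F-F^{\mathsmaller T}$ of the essentially unique form $F$, and the ingredient your proposal is missing is the one that pins down which $F$ can occur: the operator $G=F^{-1}F^{\mathsmaller T}$ shifts the vertices by $A_i\mapsto A_{i+2m}$ (Lemma \ref{rotation}), hence satisfies $G^2=I$ when $n=2m$, and the congruence canonical form of \cite{canonical} under this constraint leaves only $H_2(-1)$ blocks and $1\times 1$ identity blocks (Lemma \ref{classify}). Once $F$ is known to be canonically attached to $P$ up to scale and congruence, the disconnectedness of the strata --- which you flag as the main obstacle and propose to handle with an auxiliary tensor --- is automatic, since $f=\tfrac12\operatorname{rank}(F-F^{\mathsmaller T})$ is a discrete congruence invariant.

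The dimension count is also only gestured at, and the attributions do not match the actual sources of the terms. The quantity $4f^2-2kf+\binom{k+1}{2}$ is the dimension of the stabilizer of $F$, namely $\text{Sp}(2f,\mathbb{C})\times \text{O}(k+1-2f,\mathbb{C})$, i.e.\ $f(2f+1)+\binom{k+1-2f}{2}$ (Lemma \ref{restriction-r2}); it is not the dimension of a variety of rank-$2f$ skew forms on a $k$-dimensional space, which equals $2kf-2f^2-f$ and is a different quadratic. Similarly, the system $\omega(a_i,a_j)=0$ in the joint unknowns $((a_i),\omega)$ is bilinear, not linear, so its solution variety cannot be dimension-counted by counting equations; the paper avoids this by fixing the canonical form of $F$ first and then counting the freedom in the first $m$ vertices, where the dichotomy $mk/2$ versus $m(k+2)/2$ comes from $D(m,2m,k)\in\{0,2\}$, the number of orbit vertices $A_{1+2jm}$ lying on $A_1^{\perp}$ (Proposition \ref{freedom} and Lemma \ref{compute-drama}). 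Your instinct that wraparound of the incidence window is responsible for the two regimes $k<m$ and $k>m$ is correct, and the final optimization over integer $f$ modulo $4$ is as you describe, but as written the proposal does not establish the form of $F$, the component structure, or either summand of the dimension formula.
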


	\begin{center}
	\includegraphics[scale=0.33]{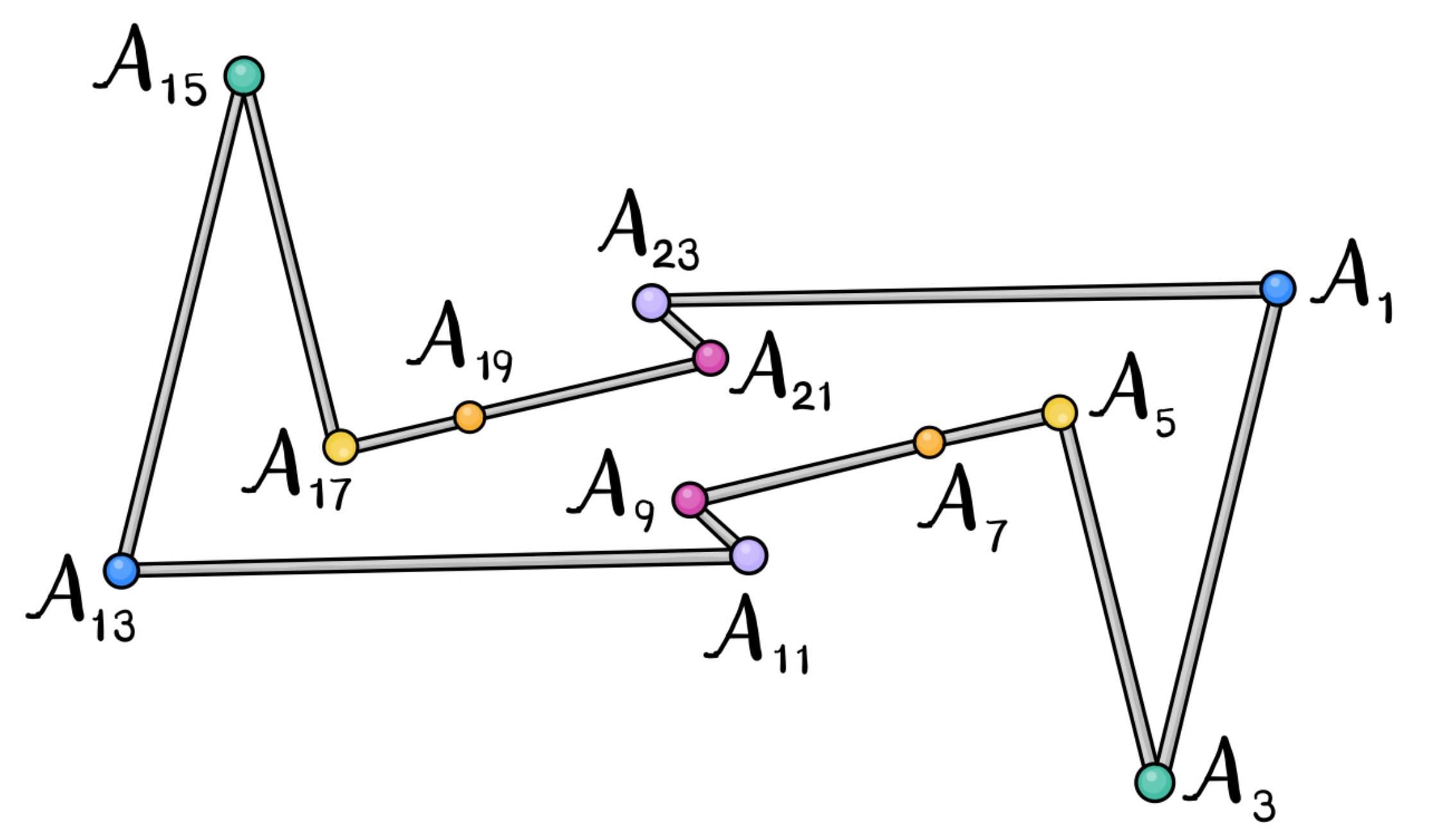}
	\figcaption{Example of a $6$-self-dual $12$-gon in $\mathbb{RP}^3$. The dimension of ${\cal M}_{6,12,3}$ is given by Theorem \ref{r=2}. A $3$ dimensional model of this polygon can be found here: \url{https://www.geogebra.org/m/szykrkvg}.}
	\label{r=2-example}
	\end{center}

\begin{thm} \label{r=3}
Let $m,n$ such that $n= 3(m,n)$. Then the moduli space of $m$-self-dual $n$-gons consists of $${ \lfloor k/2 \rfloor +2 \choose 2 }-1$$ components. These components can be sorted into $\lfloor k/2 \rfloor$ groups: ${\cal M}_1, \ldots, {\cal M}_{\lfloor k/2 \rfloor}$. \\
Each group ${\cal M}_s$ consists of $s+1$ components of dimension 
$$\dfrac{(m,n)(k+D(m,n,k))}{2} - 3s^2 + (2k+1)s - {k+1 \choose 2},$$
where $ 0 \leq D(m,n,k) \leq 3$\footnote[1]{The precise value of $D(m,n,k)$ is stated in Lemma \ref{compute-drama}, subsection \ref{novela}.}.\\

The class with the biggest dimension is ${\cal M}_{\lceil k/3\rceil}$, and the dimension depends on the congruence class of $k \pmod 3$:

$$
\dim {\cal M}_{\lceil k/3 \rceil} = 
\begin{cases}
\vspace*{0.5cm}
\dfrac{n(k+D(m,n,k)) - k(k+1) }{6} \hspace{1.1 cm} \text{ if } k \equiv 0, 2 \pmod 3 , \\

\dfrac{n(k+D(m,n,k)) - (k^2+k+4)}{6} \hspace{0.45cm} \text{ if } k \equiv 1 \pmod 3 .
\end{cases}
$$

\end{thm}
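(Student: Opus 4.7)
The plan is to follow the strategy used for Theorems \ref{(m,n)=1}, \ref{m=n}, and \ref{r=2}: lift the polygon to a sequence of vectors $v_i \in V = \mathbb{C}^{k+1}$ with $A_i = [v_i]$, encode the projective self-duality $\hat f(A_i) = B_{i+m}^*$ as a non-degenerate bilinear form $B$ on $V$ whose vanishing pattern $B(v_i, v_{i+m+j}) = 0$ for $j \in \{-(k-1), -(k-3), \ldots, k-1\}$ records the incidence, and then classify the pairs $(M, B)$ up to the action of $\text{GL}(V)$, where $M$ is the monodromy of the lift. Since $r := n/(m,n) = 3$, the shift by $d := (m,n)$ positions plays the role of the unit shift from the coprime case, and it generates an order-$3$ symmetry on $V$ that must be compatible with the symmetry of $B$ obtained by dualizing twice.

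First I would diagonalize this cubic shift-action on $V$, obtaining a decomposition $V = V_1 \oplus V_\omega \oplus V_{\omega^2}$ (with $\omega$ a primitive cube root of unity), whose summands are paired by $B$ according to the parity of $m$. The existence of a non-degenerate $B$ with the prescribed pairing forces $\dim V_\omega = \dim V_{\omega^2}$; call this common dimension $s$, which is the coarse invariant distinguishing the $\lfloor k/2 \rfloor$ groups. Within each $s$ there remains a Witt-type discrete choice $f \in \{0, 1, \ldots, s\}$ for the residual block of $B$, yielding the $s+1$ refined components and the total count $\sum_{s=1}^{\lfloor k/2 \rfloor}(s+1) = \binom{\lfloor k/2 \rfloor + 2}{2} - 1$. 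In each $(s,f)$ stratum the dimension is then obtained by adding the $\tfrac{(m,n)(k+D(m,n,k))}{2}$ free scalars describing $B$ on a fundamental domain of the cyclic shift and the quadratic contribution $-3s^2 + (2k+1)s$ from the moduli of $s$-dimensional isotropic invariant subspaces, then subtracting the $\binom{k+1}{2}$ dimensions of the stabilizer inside $\text{PGL}(V)$.

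To locate the maximal stratum I would optimize $-3s^2 + (2k+1)s$ over integers $s \in \{1, \ldots, \lfloor k/2 \rfloor\}$. Its real maximum lies at $s = (2k+1)/6$, and checking the two closest feasible integers in each residue class modulo $3$ shows the optimum is attained at $s_0 = \lceil k/3 \rceil$, reproducing the displayed case distinction after substitution (noting that for $k \equiv 1 \pmod 3$ two adjacent values of $s$ tie, and both land in ${\cal M}_{\lceil k/3 \rceil}$). The main obstacle will be the second step: verifying that the cubic-shift decomposition really gives exactly $s+1$ normal forms of $(M, B)$ rather than fewer (if some strata collapse projectively) or more (if hidden discrete invariants appear), and pinning down the correction $D(m,n,k) \in \{0, 1, 2, 3\}$, which records how the parities of $m$ and $k$ interact with the indexing of the self-dual bilinear form. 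This is the content of Lemma \ref{compute-drama}, and its careful case analysis is the technical heart of the argument; once it is in place the dimension formula and the identification of the maximal component follow by direct substitution.
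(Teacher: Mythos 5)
Your route is essentially the paper's: the congruence canonical form of Lemma \ref{classify} applied with $r=3$ gives $F=\bigoplus_{j=1}^{s_1}H_2(e^{i\theta})\oplus\bigoplus_{j=1}^{s_2}H_2(e^{-i\theta})\oplus I_{k+1-2(s_1+s_2)}$ with $\theta=2\pi/3$, which is exactly your eigenspace decomposition $V=V_1\oplus V_\omega\oplus V_{\omega^2}$ of $G=F^{-1}F^{T}$ with $\dim V_\omega=\dim V_{\omega^2}=s:=s_1+s_2$; Proposition \ref{freedom} supplies the $\tfrac{(m,n)(k+D(m,n,k))}{2}$ vertex freedom; and the optimization of $-3s^2+(2k+1)s$ at $s=\lceil k/3\rceil$ is the paper's final step. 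One correction of detail: the $s+1$ components inside each group are not a Witt-type choice for a residual block; they are the partitions $s=s_1+s_2$ recording how many $H_2$ blocks carry $e^{2\pi i/3}$ versus $e^{-2\pi i/3}$, read off directly from the canonical form, and distinct pairs $(s_1,s_2)$ give non-congruent forms, which is what rules out collapse.

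The genuine gap is your justification of the quadratic term. The quantity $-3s^2+(2k+1)s-\binom{k+1}{2}$ is not ``moduli of $s$-dimensional isotropic invariant subspaces minus a $\binom{k+1}{2}$-dimensional stabilizer in $\mathrm{PGL}(V)$''; it is minus the dimension of the full group of linear transformations preserving $F$, and that dimension has to be computed, not attributed. The paper does this in Proposition \ref{restriction-r3}: decomposing $F=F_++F_-$ into symmetric and skew-symmetric parts forces any $g$ preserving $F$ to be block-diagonal, $g=h_1\oplus h_2\oplus g_o$ with $h_1\in\mathrm{GL}(s,\mathbb{C})$ free, $h_2$ completely determined by $h_1$ via $h_1^{T}Wh_2=W$, and $g_o\in\mathrm{O}(k+1-2s,\mathbb{C})$, so the stabilizer has dimension $s^2+\tfrac{(k+1-2s)(k-2s)}{2}=3s^2-(2k+1)s+\binom{k+1}{2}$. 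Your bookkeeping does not survive scrutiny as stated: $(2k+1)s-3s^2$ equals \emph{twice} the dimension $s(k+1-s)-\binom{s+1}{2}$ of the isotropic Grassmannian in $\mathbb{C}^{k+1}$, and $\binom{k+1}{2}=\dim\mathrm{O}(k+1,\mathbb{C})$ is not the dimension of any $\mathrm{PGL}$-stabilizer arising here. Since these terms were evidently reverse-engineered from the stated answer, the step they replace --- the computation of the automorphism group of the canonical form --- is precisely the one that still needs proving, alongside Lemma \ref{compute-drama} for $D(m,n,k)$, which you correctly identified as outstanding.
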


	\begin{center}
	\includegraphics[scale=0.33]{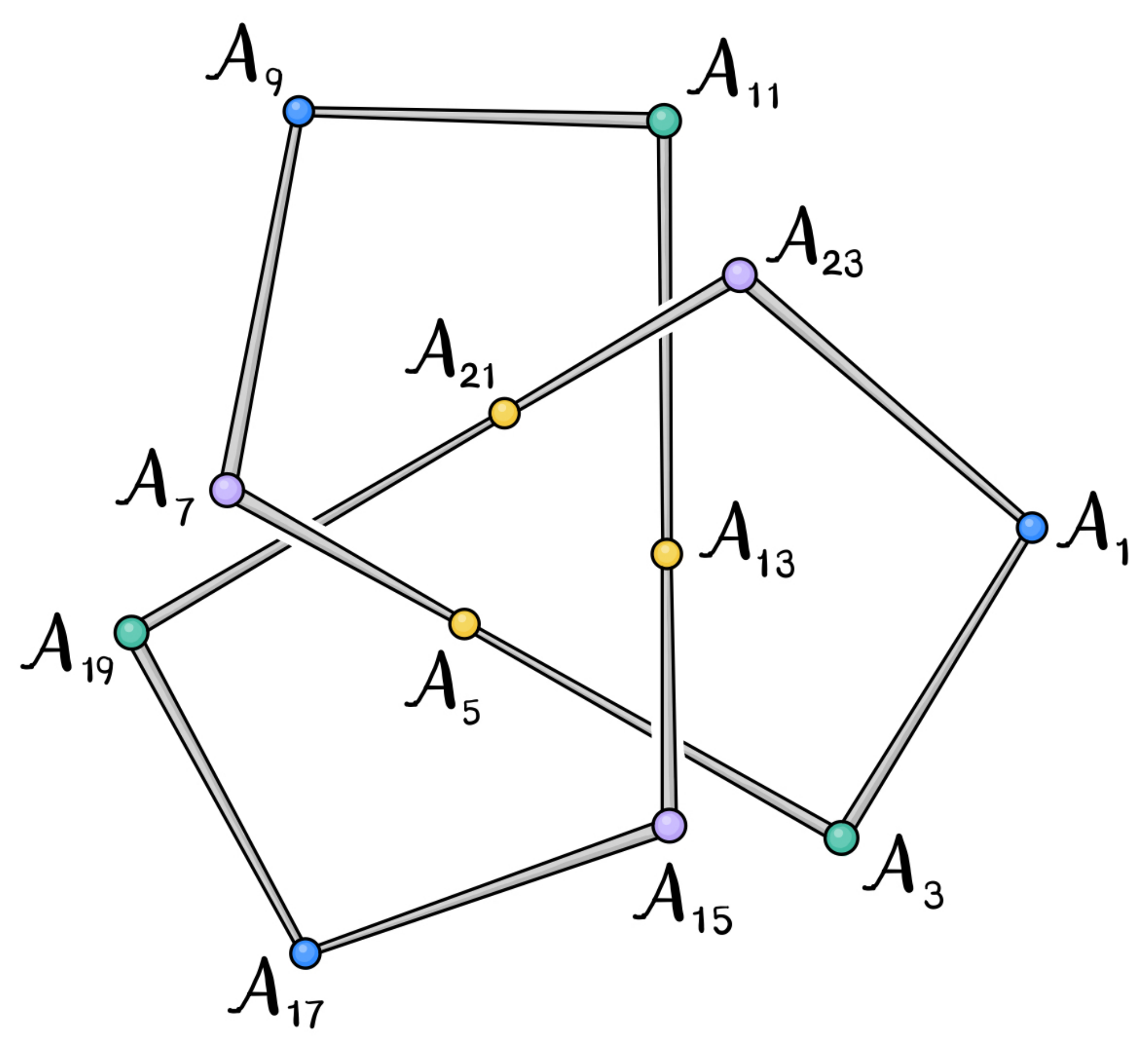}
	\figcaption{A $4$-self-dual $12$-gon in $\mathbb{RP}^3$ (described by Theorem \ref{r=3}). The vertices $A_{i}, A_{i+8}, A_{i+16}$ form an equilateral triangle contained in a horizontal plane $z=c$. The $3$-dimensional model can be found here: \url{https://www.geogebra.org/3d/vvwfsa7k}.}
	\label{r=3-example}
	\end{center}

\begin{thm}\label{k+3}
All $(k+3)$-gons in $\mathbb{CP}^k$ are $(k+3)$-self-dual.
\end{thm}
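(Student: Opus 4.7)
The plan is to deduce the result from Theorem \ref{m=n} via a matching dimension count. Let ${\cal P}_{n,k}$ denote the moduli space of generic $n$-gons in $\mathbb{CP}^k$ modulo $\text{PGL}(k+1,\mathbb{C})$. A direct parameter count gives
$$\dim {\cal P}_{k+3,k} \;=\; k(k+3) - \bigl((k+1)^2-1\bigr) \;=\; k,$$
and this quotient is an irreducible (in fact rational) variety: normalizing the first $k+2$ vertices to a fixed projective frame realizes the moduli as a single free point in an open subset of $\mathbb{CP}^k$.

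On the other hand, Theorem \ref{m=n} applies with $n=k+3$, since the parity hypothesis $n\equiv k+1\pmod 2$ holds because $n-(k+1)=2$. It yields
$$\dim {\cal M}_{k+3,k+3,k} \;=\; \frac{k(n-k-1)}{2} \;=\; \frac{k\cdot 2}{2} \;=\; k,$$
so ${\cal M}_{k+3,k+3,k}$ is a $k$-dimensional subvariety of the irreducible $k$-dimensional ${\cal P}_{k+3,k}$, and is therefore Zariski-dense in it.

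To upgrade denseness to equality, I would recast $(k+3)$-self-duality as a linear-algebraic condition. Lifting vertices to $a_i\in V:=\mathbb{C}^{k+1}$, $(k+3)$-self-duality amounts to the existence of a non-degenerate bilinear form $B$ on $V$ whose Gram matrix $M_{ij}:=B(a_i,a_j)$ is cyclic tridiagonal: $M_{ij}=0$ whenever $A_j\notin\{A_{i-2},A_i,A_{i+2}\}$. The existence of a nonzero such $B$ is the vanishing of certain minors — a closed algebraic condition on the configuration — so denseness forces it to hold on all of ${\cal P}_{k+3,k}$. Non-degeneracy of $B$ then follows on a dense open subset from the identity $\ker(M)=\Lambda$, where $\Lambda\subset\mathbb{C}^{k+3}$ is the $2$-dimensional kernel of the configuration matrix with columns $a_i$. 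The main obstacle is establishing the existence of a nonzero cyclic tridiagonal $M$ in the first place: a naive count shows that for $k\ge 2$ there are strictly more linear equations than unknowns, so constructing $M$ requires exhibiting hidden dependencies among the constraints. I expect these to come from the Plücker-type identities satisfied by the hyperplane vectors $b_i\in V^*$, or equivalently from Gale duality, which identifies the configuration with $k+3$ points on $\mathbb{P}^1$; making these dependencies explicit, and verifying non-degeneracy of the resulting $B$ for every configuration in general position, is the crux of the argument.
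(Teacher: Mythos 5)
Your argument has a genuine gap, and you have correctly identified where it is: the dimension count only shows that ${\cal M}_{k+3,k+3,k}$ is a $k$-dimensional subset of the $k$-dimensional, irreducible moduli space ${\cal C}_{k+3,k}$, hence (granting constructibility) Zariski-dense in it --- but the theorem asserts equality, and density does not give you that. Everything you propose to close the gap (existence of a nonzero cyclic-tridiagonal Gram matrix for \emph{every} configuration, and its non-degeneracy) is precisely the content of the theorem restated in linear-algebraic terms, and you leave it as ``the crux of the argument.'' So as written this is a reduction of the problem, not a proof. One correction to your count, though: since $m=n$ forces the form $F$ to be symmetric (Proposition \ref{n=m-form}), the natural ansatz is a \emph{symmetric} $B$, which has $\binom{k+2}{2}=\frac{(k+1)(k+2)}{2}$ free entries, while the number of vanishing conditions ($B(a_i,a_j)=0$ for unordered pairs with $j-i\not\equiv 0,\pm2$) is $\binom{k+4}{2}-2(k+3)=\frac{k(k+3)}{2}$ --- exactly one \emph{fewer} equation than unknowns. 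So a nonzero symmetric $B$ with the required zero pattern exists for every configuration with no hidden dependencies needed; the remaining (and still unresolved) issue is proving that some solution is non-degenerate and that its kernel conditions are exact (i.e.\ $f(a_i)$ vanishes on no vertex outside $\{a_{i\pm 2}\}$) for every admissible polygon, not just a dense open set of them.

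The paper's proof takes the route you only gesture at in your last sentence, but uses it at the level of the statement itself rather than to analyze the linear system: the Gale transform sends a $(k+3)$-gon in $\mathbb{CP}^k$ to a $(k+3)$-gon in $\mathbb{CP}^{n-k-2}=\mathbb{CP}^1$, every polygon in $\mathbb{CP}^1$ is trivially $0$-self-dual (duality there is itself a projective transformation), and Lemma \ref{translation} converts $0$-self-duality of ${\cal G}(P)$ into $(k+3)$-self-duality of $P$. This is a two-line argument that covers all admissible polygons at once, with no genericity loss. If you want to salvage your approach, the honest comparison is that your dimension count is a useful consistency check on Theorem \ref{m=n} (the self-dual locus fills up the whole moduli space only when $n=k+3$), but it cannot replace the pointwise construction; the Gale transform supplies exactly the ``hidden dependency'' you were looking for.
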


\begin{thm}\label{m=0} If $k$ is odd, then
$$\dim {\cal M}_{0, n, k} = \dfrac{(k+1)(n-k-2)}{2}.$$
\end{thm}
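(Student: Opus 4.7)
The plan is to realize $\mathcal{M}_{0,n,k}$ via an incidence variety of pairs (polygon, bilinear form) and use a parameter count, mirroring the bilinear-form approach that underlies the proofs of Theorems \ref{m=n} and \ref{r=2}. Lift each vertex $A_i$ to a non-zero vector $V_i\in\mathbb{C}^{k+1}$; then a projective isomorphism $\hat f$ with $\hat f(A_i)=B_i^*$ corresponds to a non-degenerate bilinear form $\omega$ on $\mathbb{C}^{k+1}$ satisfying
$$\omega(V_i,V_j)=0\qquad\text{whenever } V_j\in B_i,$$
that is, whenever $j\in\{i-(k-1),\,i-(k-3),\,\ldots,\,i+(k-1)\}$. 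The distinctive feature of the case $m=0$ is that $A_i$ is itself the \emph{middle} vertex of its own $B_i$, so the relations include $\omega(V_i,V_i)=0$ for every $i$. Since the constraint set is symmetric under swapping $i\leftrightarrow j$, the symmetric and antisymmetric parts of $\omega$ obey the vanishing relations independently; the component of maximal dimension arises when $\omega$ is purely alternating. A non-degenerate alternating form on $\mathbb{C}^{k+1}$ exists precisely when $k+1$ is even, which matches the hypothesis that $k$ be odd.

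Taking $\omega$ to be a symplectic form, I would count parameters on the incidence variety $\mathcal{I}\subset\{\text{polygons}\}\times\mathbb{P}\bigl(\Lambda^2(\mathbb{C}^{k+1})^*\bigr)$ of pairs $(P,[\omega])$ subject to the vanishing relations. Polygons contribute $nk$ parameters, the projective class $[\omega]$ contributes $\binom{k+1}{2}-1$, and the unordered pairs with $0<|i-j|\le k-1$ give $n(k-1)/2$ linear conditions (the diagonal relations $\omega(V_i,V_i)=0$ being automatic for alternating forms). Quotienting by $\mathrm{PGL}(\mathbb{C}^{k+1})$ of dimension $(k+1)^2-1$ yields
$$nk+\tfrac{k(k+1)}{2}-1-\tfrac{n(k-1)}{2}-\bigl((k+1)^2-1\bigr) \;=\; \tfrac{(k+1)(n-k-2)}{2},$$
which is the asserted dimension.

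The step I expect to be the principal obstacle is justifying that this naive count equals the actual dimension: one must verify that the $n(k-1)/2$ vanishing relations are linearly independent at a generic point of $\mathcal{I}$, and that the projection $\mathcal{I}\to\mathcal{M}_{0,n,k}$ is generically finite (equivalently, that the symplectic form is determined by a generic $0$-self-dual polygon up to a scalar). I would address both points by producing an explicit $(P_0,[\omega_0])\in\mathcal{I}$ at which the constraint matrix attains maximal rank—constructed from the standard symplectic form on $\mathbb{C}^{k+1}$ paired with a polygon obtained by solving the resulting symplectic-orthogonality finite-difference system, in the spirit of the constructions underpinning Theorems \ref{m=n} and \ref{r=2}. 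Lower semicontinuity of rank then propagates maximal rank to a Zariski open subset of $\mathcal{I}$. A complementary parameter count rules out competing components arising from bilinear forms with non-trivial symmetric part: their expected dimension exceeds that of the alternating component by $-(n-k-1)$, so for every $n>k+1$ the alternating-form component is dominant, yielding the dimension stated in the theorem.
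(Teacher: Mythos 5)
Your count is correct and lands on the paper's answer; at bottom it is the same bilinear-form parameter count the paper uses, just organized as an incidence variety of pairs $(P,[\omega])$ modulo all of $\mathrm{PGL}(k+1,\mathbb{C})$ rather than fixing $\omega=\Omega_{k+1}$ and dividing by its stabilizer $\mathrm{Sp}(k+1,\mathbb{C})$ --- the two bookkeepings agree because the nondegenerate alternating forms constitute a single $\mathrm{PGL}$-orbit. The one place where the paper's route is genuinely stronger is the step where you restrict to alternating $\omega$: you discard forms with nontrivial symmetric part by an expected-dimension comparison, which only shows the alternating locus dominates \emph{if} the other loci behave as that heuristic predicts. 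The paper gets this for free from its general machinery: for $m=0$ Lemma \ref{rotation} gives $G=F^{-1}F^{T}=\mathrm{id}$ as a projective map, so $F^{T}=\pm F$, and Proposition \ref{n=m-form} excludes the symmetric case since $m=0\neq n$; hence $F$ \emph{must} be symplectic and there are no competing components to estimate (this is also exactly where the hypothesis that $k$ is odd is forced, as you note). Your remaining worries --- linear independence of the $n(k-1)/2$ vanishing conditions at a generic point and generic finiteness of $\mathcal{I}\to{\cal M}_{0,n,k}$ --- are legitimate, but the paper's own proof does not address them any more rigorously: it is the same formal count via Proposition \ref{freedom} (with $D(0,n,k)=1$, giving $n(k+1)/2$ degrees of freedom) and Lemma \ref{restriction-r2} (subtracting $\dim\mathrm{Sp}(k+1,\mathbb{C})=(k+1)(k+2)/2$). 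Finally, the paper also records a one-line alternative you might prefer: the Gale transform (Lemma \ref{translation}) identifies ${\cal M}_{0,n,k}$ with ${\cal M}_{n,n,n-k-2}$, and Theorem \ref{m=n} then yields $\tfrac{(k+1)(n-k-2)}{2}$ directly.
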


\clearpage
For instance, one can use Theorems \ref{(m,n)=1}, \ref{r=2}, \ref{r=3}, \ref{m=0} to provide the dimension of the space of self-dual $12$-gons in $\mathbb{RP}^3$ for different values of $m$:

\begin{center}
\begin{tabular}{c c c}
$\boldsymbol{m}$ \hspace{0.2 cm} & \textbf{Theorem} & $\boldsymbol{\dim {\cal M}_{m, 12, 3}}$\\
\hline
$4$ \hspace{0.2 cm}& Theorem \ref{r=3} & $4$ \\
$6$ \hspace{0.2 cm}& Theorem \ref{r=2} & $5$ \\
$8$ \hspace{0.2 cm}& Theorem \ref{r=3} & $6$ \\
$12$ \hspace{0.2 cm}& Theorem \ref{m=n} & $12$ \\
$0$ \hspace{0.2 cm}& Theorem \ref{m=0} & $14$
\end{tabular}
\end{center}

More than providing the dimension of ${\cal M}_{m,n,k}$ in different cases, this paper provides an explicit construction of $m$-self-dual $n$-gons.\\

Here is an outline of the arguments followed to obtain the previous results: Section \ref{basics} provides the framework used to study the space ${\cal M}_{m,n,k}$: every polygon $P \in {\cal M}_{m,n,k}$ is associated with a non-degenerate bilinear form $F$ defined up to a non-zero factor. More importantly, this bilinear form $F$ gives rise to a projective transformation $G$ that acts on the vertices of $P$ as a cyclic permutation. In particular, the projective transformation $G$ can be used to reconstruct the self-dual polygon $P$. Section \ref{basics} splits in two parts: when the associated bilinear form $F$ is symmetric and when it is not. In both cases, the method to obtain the dimension of $m$-self-dual $n$-gons is the same: pick the ``first'' vertices of the polygon $P$, and use the projective transformation $G$ to construct the rest. Therefore: 

$$\dim {\cal M}_{m,n,k} = \substack{\text{Degrees of freedom} \\ \text{we have to choose} \\\text{the first vertices}} - \substack{\text{Dimension of group of} \\ \text{linear transformations}\\ \text{that preserve } F.}$$

In Section \ref{gale-transform}, we use the Gale transform for polygons to extend the results known to more cases.\\ 

The paper concludes with Section \ref{Pentagram-section}, which contains a conjecture about the generalization of the Pentagram map in higher dimensions, found using computer-based mathematical experiments. The computational data supports the conjecture for up to dimension $k=200$.\\


\section{Bilinear forms and self-dual polygons}\label{basics}

\begin{defi}
Given a $n$-gon $P = (A_1, A_3, \ldots , A_{2n-1}), A_i \in \mathbb{CP}^k$, and $\ell \in \mathbb{N}$, define the $(\ell n)$-gon, $\ell P:= (A_1, A_3, \ldots ,A_{2\ell n -1})$. A polygon is called \textbf{\emph{simple}} if $P \neq \ell P'$ for any $\ell >1$ and any polygon $P'$.
\end{defi}

Throughout this paper, we will only consider simple polygons such that any $(k+1)$-consecutive vertices are linearly independent to allow the following definition.

\begin{defi} 
Given a closed polygon $P = (A_1, A_3, ... , A_{2n-1}), A_i \in \mathbb{CP}^k$, its \textbf{\emph{dual polygon}} $P^* := \left(B_k^*, B_{k+2}^*, \ldots , B_{k+2(n-1)}^*\right), B_i^* \in (\mathbb{CP}^k)^*$ where 
$$B_{i} := \text{span} \{A_{i-(k-1)}, A_{i-(k-3)}, \ldots ,A_{i+(k-3)}, A_{i+(k-1)} \}.$$

Given $m \in \mathbb{Z}$ such that $m \equiv k-1 \pmod 2$, we say $P$ is \textbf{\emph{$\boldsymbol{m}$-self-dual}} if there is a projective transformation $\hat{f}: \mathbb{CP}^k \rightarrow (\mathbb{CP}^k)^*$ such that $\hat{f}(A_i)=B_{i+m}^*$ for every subindex $i$.
\end{defi}

There are some natural assumptions about the variables $m$, $n$, and $k$:
	\begin{itemize}
		\item $n\geq k+3$: since $\text{PGL}(k+1, \mathbb{C})$ is $(k+2)$-transitive in $\mathbb{CP}^k$, the problem of classifying $m$-self-dual $n$-gons is trivial when $n\leq k+2$.
		\item $m \equiv k-1 \pmod 2$: this condition is necessary because the indices for the vertices of $P$ are odd, and the indices of the vertices of $P^*$ have the same parity as $k$.
	\end{itemize}


Let $P$ be an $m$-self-dual $n$-gon, with $\hat{f}$ such that $\hat{f}(A_i)= B_{i+m}^*$. Consider the linear transformation $f: \mathbb{C}^{k+1}\rightarrow \left(\mathbb{C}^{k+1}\right)^*$ such that $f$ sends each line $A_i$ to the line $B_{i+m}^*$, and a bilinear form $F$ defined as $$F(u,v)=\langle f(u), v\rangle.$$ Both $f$ and $F$ are well-defined up to a non-zero factor.

\begin{defi}\label{g} 
Given $B \subset \mathbb{C}^{k+1}$, define the \textbf{\emph{perpendicular space}} of $B$, denoted as $\boldsymbol{B^\perp}$, to be:
$$B^{\perp}:= \{ y \in \mathbb{C}^{k+1} \hspace{0.2cm} \vert \hspace{0.2cm} F(x,y)=0, \text{ for any } x\in B\}.$$

Let $G: \mathbb{CP}^k\rightarrow \mathbb{CP}^k$ be the projective transformation $G(A):=(A^\perp)^\perp$. In matrix representation $G= F^{-1}F^{\mathsmaller T}$.
\end{defi}

\begin{lem}\label{rotation}
If $P$ is $m$-self-dual $n$-gon, then $G(A_i)=A_{i+2m}$.
\end{lem}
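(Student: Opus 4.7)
The plan is to unfold the two consecutive perpendicular operations and match the resulting subspace against the indicated vertex. Concretely, I want to show that $A_i^\perp$ is the hyperplane $B_{i+m}$ (recovering the original $m$-self-duality condition on the level of hyperplanes), and then that the second perpendicular $B_{i+m}^\perp$ is the one-dimensional subspace spanned by $A_{i+2m}$.

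First I would identify $A_i^\perp$. By definition, $A_i^\perp = \{y : F(A_i,y)=0\} = \{y : \langle f(A_i), y\rangle = 0\}$. Since $f$ lifts $\hat f$, the line $f(A_i)$ represents the point $B_{i+m}^*\in (\mathbb{CP}^k)^*$; but $B_{i+m}^*$ is precisely the functional whose kernel is the hyperplane $B_{i+m}\subset \mathbb{C}^{k+1}$. Hence $A_i^\perp = B_{i+m}$.

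Next I would compute $B_{i+m}^\perp$. Because $F$ is non-degenerate (as $\hat f$, and therefore $f$, is an isomorphism) and $B_{i+m}$ is $k$-dimensional in $\mathbb{C}^{k+1}$, its perpendicular is automatically $1$-dimensional. So it suffices to exhibit one non-zero vector in $B_{i+m}^\perp$, and I claim this vector is $A_{i+2m}$. Writing $B_{i+m}=\mathrm{span}\{A_{i+m+r} : r\in\{-(k-1),-(k-3),\ldots,(k-1)\}\}$, the condition $A_{i+2m}\in B_{i+m}^\perp$ reduces to $F(A_{i+m+r},A_{i+2m})=0$ for each such $r$. Using the identification from the previous step applied to the index $i+m+r$, this is the statement $A_{i+2m}\in B_{i+2m+r}$. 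Since $B_{i+2m+r}=\mathrm{span}\{A_{i+2m+r+s} : s\in\{-(k-1),\ldots,(k-1)\}\text{ odd}\}$, the required membership is just the condition $-r\in\{-(k-1),\ldots,(k-1)\}$, which is exactly our hypothesis on $r$.

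Putting the two steps together yields $G(A_i) = (A_i^\perp)^\perp = B_{i+m}^\perp = \mathrm{span}\{A_{i+2m}\}$, i.e.\ $G(A_i)=A_{i+2m}$ in $\mathbb{CP}^k$. There is no real obstacle here beyond careful bookkeeping of indices; the two structural inputs are (i) the fact that a point in $(\mathbb{CP}^k)^*$ corresponds to its kernel hyperplane in $\mathbb{CP}^k$, which turns $m$-self-duality into the identity $A_i^\perp=B_{i+m}$, and (ii) the non-degeneracy of $F$, which is what forces $B_{i+m}^\perp$ to be exactly one-dimensional so that containment of $A_{i+2m}$ already gives equality. The general-position assumption that any $k+1$ consecutive vertices be linearly independent is used implicitly to guarantee that the hyperplanes $B_\ell$ are genuine hyperplanes, so that the dimension count in step two is valid.
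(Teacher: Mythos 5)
Your proposal is correct and follows essentially the same route as the paper: identify $A_i^\perp$ with the hyperplane $B_{i+m}$ via the self-duality map, then show the second perpendicular picks out $A_{i+2m}$ by checking the vanishing conditions $F(A_{i+m+r},A_{i+2m})=0$. The only cosmetic difference is that you close the argument with a dimension count from non-degeneracy of $F$, whereas the paper writes $(A_i^\perp)^\perp$ as the intersection of the $k$ hyperplanes $B_{i+2m+r}$ and invokes the general-position assumption directly; the two are equivalent.
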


\begin{proof}

First, notice 
$$F(A_i, A_j)= \langle f(A_i), A_j \rangle = \langle B_{i+m}^*, A_j\rangle =0 \Leftrightarrow A_j \in B_{i+m}.$$
In particular, 
\begin{equation}\label{perpendicular}
F(A_i, A_{i+m\pm \ell})=0,
\end{equation}
for all $\ell = k-1, k-3, \ldots \geq 0$, and any subindex $i$.\\

Since $$A_i^\perp = B_{i+m}= \text{span}\{A_{i+m-(k-1)}, A_{i+m-(k-3)}, \ldots , A_{i+m+(k-3)}, A_{i+m+(k-1)} \}, $$
Then
$$G(A_i)= (A_{i}^{\perp})^{\perp} = A_{i+m-k+1}^{\perp} \cap A_{i+m-k+3}^{\perp} \cap \ldots \cap A_{i+m+k-1}^\perp = A_{i+2m}. $$
\end{proof}

Hence, $G$ makes a cyclic permutation on the vertices of $P$, and $G^r= I$, where $r=\frac{n}{(m,n)}$.\\

\begin{lem} \label{classify}
Suppose $F$ is a non-degenerate bilinear form satisfying $(F^{-1}F^{\mathsmaller T})^r=I$ for some $r \in \mathbb{N}$. Then there is an appropriate basis of $\mathbb{C}^{k+1}$ such that the matrix representation of $F$ is a direct sum of matrices: $$F= \bigoplus_{j=1}^{\mathcal{S}} H_2(e^{i\theta_j})\bigoplus I_{k+1-2{\mathcal{S}}}, $$ where

$H_2(e^{i\theta_j}) = \begin{pmatrix}
							0 & 1 \\
						e^{i\theta_j} & 0 
							\end{pmatrix}$ with $r\theta_j \in 2\pi\mathbb{Z}$ and $\theta_j \notin 2\pi\mathbb{Z}$.
\end{lem}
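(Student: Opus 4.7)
The plan is to read the condition $(F^{-1}F^{\mathsmaller T})^r = I$ as saying that $G := F^{-1}F^{\mathsmaller T}$ is a linear automorphism of $\mathbb{C}^{k+1}$ whose order divides $r$, and which moreover preserves $F$ in the sense $G^{\mathsmaller T} F G = F$. This preservation identity is immediate from the definition: rewriting it as $F^{\mathsmaller T} = FG$ and transposing gives $F = G^{\mathsmaller T} F^{\mathsmaller T} = G^{\mathsmaller T} F G$. Because the minimal polynomial of $G$ divides $x^r - 1$, which splits with simple roots over $\mathbb{C}$, the transformation $G$ is diagonalizable and all its eigenvalues are $r$-th roots of unity. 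Write $V_\lambda := \ker(G - \lambda I)$, so that $\mathbb{C}^{k+1} = \bigoplus_\lambda V_\lambda$.

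Next I would use the $G$-invariance of $F$ to understand how $F$ interacts with this decomposition. Applying $G^{\mathsmaller T} F G = F$ to eigenvectors $v \in V_\lambda$, $w \in V_\mu$ gives $\lambda\mu\, F(v,w) = F(v,w)$, so $F(V_\lambda, V_\mu) = 0$ whenever $\lambda\mu \neq 1$. Nondegeneracy of $F$ then forces $\dim V_\lambda = \dim V_{\lambda^{-1}}$, and each pair $V_\lambda, V_{\lambda^{-1}}$ is put in perfect pairing by $F$. A second use of the definition, via $v^{\mathsmaller T} F^{\mathsmaller T} w = v^{\mathsmaller T} F G w = \lambda^{-1} v^{\mathsmaller T} F w$ for $w \in V_{\lambda^{-1}}$, yields the twisted symmetry
$$F(w, v) = \lambda^{-1} F(v, w) \qquad \text{for } v \in V_\lambda,\ w \in V_{\lambda^{-1}}.$$
Specializing to $\lambda = 1$ shows $F|_{V_1}$ is symmetric nondegenerate; to $\lambda = -1$ shows $F|_{V_{-1}}$ is antisymmetric nondegenerate.

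Finally I would normalize each piece separately. On $V_1$, any complex symmetric nondegenerate form admits an orthonormal basis, contributing the summand $I_{k+1-2\mathcal{S}}$. On $V_{-1}$, Darboux normalization of the complex symplectic form gives blocks $\begin{pmatrix} 0 & 1 \\ -1 & 0 \end{pmatrix} = H_2(e^{i\pi})$. For each pair $V_\lambda \oplus V_{\lambda^{-1}}$ with $\lambda \neq \pm 1$, pick any basis $v_1, \ldots, v_d$ of $V_\lambda$ and let $w_1, \ldots, w_d$ be the $F$-dual basis in $V_{\lambda^{-1}}$, so $F(v_i, w_j) = \delta_{ij}$; the twisted symmetry then forces $F(w_i, v_j) = \lambda^{-1}\delta_{ij}$, and ordering the pair's basis as $(v_i, w_i)$ contributes a $2\times 2$ block $H_2(\lambda^{-1})$. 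Relabeling each nonidentity eigenvalue as $e^{i\theta_j}$ matches the stated decomposition; the constraint $r\theta_j \in 2\pi\mathbb{Z}$ records that these eigenvalues are $r$-th roots of unity, and $\theta_j \notin 2\pi\mathbb{Z}$ records that the $\lambda = 1$ part was peeled off into the identity summand. The main delicate point is the paired-eigenvalue case: one must check that the two off-diagonal entries combine into exactly the $H_2$ form claimed in the lemma, but once the identity $F(w, v) = \lambda^{-1} F(v, w)$ is in hand this reduces to choosing a dual basis and tracking conventions.
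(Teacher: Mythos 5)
Your proof is correct, but it follows a genuinely different route from the paper. The paper invokes the Horn--Sergeichuk classification of complex matrices up to congruence (direct sums of blocks $J_\ell(0)$, $\Gamma_\ell$, $H_{2\ell}(\mu)$), discards the singular blocks, and then checks that the condition $(F^{-1}F^{\mathsmaller T})^r=I$ eliminates every block except $\Gamma_1=(1)$ and $H_2(\mu)$ with $\mu$ a nontrivial $r$-th root of unity. You instead work directly with the cosquare $G=F^{-1}F^{\mathsmaller T}$: the identity $G^{\mathsmaller T}FG=F$, the diagonalizability of $G$ (since $x^r-1$ has simple roots over $\mathbb{C}$), the orthogonality $F(V_\lambda,V_\mu)=0$ unless $\lambda\mu=1$, and the twisted symmetry $F(w,v)=\lambda^{-1}F(v,w)$ together let you build the normal form by hand from dual bases. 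Your computations check out: in particular $F(v_i,v_j)=0$ on $V_\lambda$ for $\lambda\neq\pm1$ because $\lambda^2\neq1$, so the $2\times2$ blocks really are $H_2(\lambda^{-1})$, and the $\lambda=1$ and $\lambda=-1$ cases reduce to the standard normalizations of nondegenerate symmetric and alternating forms over $\mathbb{C}$. What your approach buys is self-containedness and transparency: the diagonalizability of $G$ is exactly what forbids the larger blocks (whose cosquares carry nontrivial Jordan structure), a point the paper's proof handles only by asserting that $\Gamma_\ell^{-1}\Gamma_\ell^{\mathsmaller T}\neq I_\ell$ and $H_{2\ell}(\mu)^{-1}H_{2\ell}(\mu)^{\mathsmaller T}\neq I_{2\ell}$ for $\ell>1$ --- which by itself does not rule out some \emph{power} being the identity. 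What the paper's approach buys is brevity, since the congruence canonical form is quoted rather than rederived.
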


\begin{proof}
According to \cite{canonical}, Theorem 1.1 part (a), every complex square matrix $F$ is congruent to a direct sum of matrices of the following three types:

	\begin{enumerate}

	\item[\textbf{Type 0:}] The $\ell \times \ell$ Jordan block with eigenvalue $0$:
	$$J_\ell(0)= \begin{pmatrix}
			0 & 1 & \cdots & 0 \\
			\vdots & \ddots & \ddots & \vdots \\
			0 & \cdots & 0 & 1 \\
			0 & \cdots & 0 & 0
			\end{pmatrix},
		\hspace{0.5cm} 
		(J_1(0)=(0)).$$

	\item[\textbf{Type 1:}] The $\ell \times \ell$ matrices:
	$$\Gamma_\ell = \begin{pmatrix}
				0 & \cdots & 0 & 0 & (-1)^{\ell+1} \\
				0 & \cdots & 0 & (-1)^\ell & (-1)^\ell \\
				0 & & \text{ \rotatebox{75}{$\ddots$} } & \text{ \rotatebox{75}{$\ddots$} } & 0 \\
				0 & -1 & -1 & \cdots & 0\\
				1 & 1 & 0 & \cdots & 0\\
				\end{pmatrix} ,
			\hspace{0.5cm}
			(\Gamma_{1}=(1)).	
				$$

	\item[\textbf{Type 2:}] And the $2\ell \times 2\ell$ matrix:
	$$H_{2\ell}(\mu) = \begin{pmatrix}
	0 & I_\ell \\
	J_{\ell}(\mu) & 0
	\end{pmatrix}, 
	\hspace{0.5cm}
	H_{2}(\mu) = \begin{pmatrix}
	0 & 1 \\
	\mu & 0
	\end{pmatrix}, $$
	
	where $J_\ell$ is the $\ell \times \ell$ Jordan block with eigenvalue $\mu$, $0\neq \mu \neq (-1)^{\ell+1}$.
	
	\end{enumerate}
	
Since $F$ is non-degenerate, the direct sum cannot contain matrices of type 0.\\

Note that, if $F$ is a direct sum $F = \bigoplus_i M_i$ of square matrices $M_i$, then $$(F^{-1}F^{\mathsmaller T})^r= \bigoplus_i (M_i^{-1}M_i^{\mathsmaller T})^r.$$

Given that $(F^{-1}F^{\mathsmaller T})^r= I$, then each one of the summands of $F$ must satisfy the same property. Straightforward calculations show that $\Gamma_\ell^{-1}\Gamma_{\ell}^{\mathsmaller T} \neq I_{\ell} $ and $H_{2\ell}(\mu)^{-1}H_{2\ell}(\mu)^{\mathsmaller T} \neq I_{2\ell}$ if $\ell>1$. \\

If $\ell =1$, then 

	\begin{itemize}
	\item $(\Gamma_1^{-1} \Gamma_1^{\mathsmaller T})^r =1$ for any $r$, and
	\item $(H_2(\mu)^{-1}H_2(\mu)^{\mathsmaller T})^r = \begin{pmatrix}
	\mu^{-r} & 0 \\
	0 & \mu^{r}
	\end{pmatrix} = \begin{pmatrix}
	1 & 0 \\
	0 & 1
	\end{pmatrix} $ if and only if $ \mu = e^{\theta i}$, with $r\theta \in 2\pi \mathbb{Z}$. Moreover $0\neq \mu \neq 1$, thus $\theta \notin 2\pi\mathbb{Z}$. 
	\end{itemize}
\end{proof}

From here, we split the analysis into two cases, depending on whether $F$ is symmetric or not.


\subsection{$F$ symmetric}
		
\begin{prop}\label{n=m-form}
Let $P \in {\cal{M}}_{m, n, k}$. The corresponding bilinear form $F$ is symmetric if and only if $m=n$.
\end{prop}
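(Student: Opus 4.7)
The plan is to translate symmetry of $F$ into a concrete condition on the matrix $G := F^{-1}F^T$, then exploit Lemma \ref{rotation}, which tells us $G$ acts projectively on vertices by $G(A_i) = A_{i+2m}$. The central equivalence driving the argument is that $F = F^T$ if and only if $G = I$ \emph{as a matrix} (not merely as a projective transformation).

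For the direction $m = n \Rightarrow F$ symmetric, I would first apply Lemma \ref{rotation} to conclude $G(A_i) = A_{i+2n} = A_i$ for every $i$. Because $P$ is simple, $n \geq k+3$, and any $k+1$ consecutive vertices are linearly independent, the ``$G$ fixes every vertex'' condition should force $G$ to be the identity in $\text{PGL}$, hence $G = \mu I$ linearly for some scalar $\mu$. Substituting into $G = F^{-1}F^T$ gives $F^T = \mu F$; transposing again yields $\mu^2 = 1$, so $F$ is either symmetric or skew-symmetric. To rule out the skew case, evaluate $F(A_i, A_i) = \langle B^*_{i+n}, A_i\rangle$: this vanishes iff $A_i \in B_{i+n}$, equivalently iff $-n \pmod{2n}$ lies in $\{-(k-1), -(k-3), \ldots, k-1\}$, which is impossible when $n \geq k+3$. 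So $F(A_i, A_i) \neq 0$, ruling out skew-symmetry and forcing $\mu = 1$.

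For the converse, starting from $F = F^T$ immediately yields $G = I$ as a matrix, and Lemma \ref{rotation} gives $A_{i+2m} = A_i$ for all $i$; simplicity of $P$ then forces $2m \equiv 0 \pmod{2n}$, i.e., $m \equiv 0 \pmod n$. Combined with the parity constraint $m \equiv k-1 \pmod 2$ and the working convention $m \in \{1, \ldots, n\}$, the only possibility is $m = n$.

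The main obstacle I expect is making the step ``$G$ fixes every vertex $\Rightarrow G = \mu I$'' fully rigorous: the standing hypothesis only guarantees linear independence of consecutive $(k+1)$-tuples of vertices, so one must propagate the common eigenvalue from a fixed basis of $k+1$ consecutive vertices to the remaining vertices using their linear expansions in that basis, arguing that these expansions have enough non-degenerate coefficients to tie all eigenvalues $\lambda_i$ together. A secondary subtlety is the residual solution $m \equiv 0 \pmod n$ with $m = 0$ in the reverse direction, which is excluded either by convention or, if needed, by the parallel analysis (via Lemma \ref{classify} applied with $r = 2$) showing the corresponding $F$ turns out skew-symmetric rather than symmetric.
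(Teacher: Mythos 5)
Your direction ``$F$ symmetric $\Rightarrow m=n$'' is essentially the paper's argument (the paper computes $F(A_{i+m\pm\ell},A_i)=0$ and intersects the hyperplanes $B_{i+2m\pm\ell}$ to get $A_i=A_{i+2m}$, which is exactly your ``$G=I$ plus Lemma \ref{rotation}'' step), and your handling of the residual solution $m\equiv 0 \pmod n$ is no worse than the paper's. The problems are in the direction $m=n\Rightarrow F$ symmetric, where you depart from the paper and both of your key steps have genuine gaps. First, ``$G$ fixes every vertex projectively, hence $G=\mu I$'' does not follow from the standing hypotheses: the propagation you sketch (expand $A_{2(k+2)-1}$ in the basis $A_1,\dots,A_{2k+1}$; only the coefficient of $A_1$ is forced to be nonzero, since otherwise $A_3,\dots,A_{2k+3}$ would be a dependent consecutive $(k+1)$-tuple) yields only the periodicity $\lambda_i=\lambda_{i+k+1}$ of the eigenvalues, which ties them all together only when $\gcd(k+1,n)=1$. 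When $\gcd(k+1,n)>1$, a diagonalizable non-scalar $G$ whose eigenspaces each contain a periodic subfamily of the vertices is perfectly compatible with simplicity and with independence of consecutive $(k+1)$-tuples (e.g.\ two $2$-planes in $\mathbb{C}^4$ with the eight vertices alternating between them), so this step needs a different input, not just more care. Second, your exclusion of the skew case is incorrect as stated: $F(A_i,A_i)=\langle B^*_{i+n},A_i\rangle=0$ means $A_i$ lies in the hyperplane $B_{i+n}$, not that $A_i$ is one of the $k$ vertices spanning it. Your index computation only rules out the latter; since $A_i$ together with $A_{i+n-(k-1)},\dots,A_{i+n+(k-1)}$ is not a consecutive block, the hypothesis on consecutive $(k+1)$-tuples does not forbid $A_i\in B_{i+n}$, and so $F(A_i,A_i)\neq 0$ is not established.

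The paper avoids both issues by a different mechanism: for $m=n$ one has $r=n/(m,n)=1$, so $(F^{-1}F^{T})^{r}=I$ and Lemma \ref{classify} applies; the canonical form $\bigoplus_{j=1}^{\mathcal{S}} H_2(e^{i\theta_j})\oplus I_{k+1-2\mathcal{S}}$ with $\theta_j\notin 2\pi\mathbb{Z}$ and $r\theta_j\in 2\pi\mathbb{Z}$ then forces $\mathcal{S}=0$, i.e.\ $F$ is congruent to $I_{k+1}$, which in particular excludes the skew-symmetric alternative without ever discussing $F(A_i,A_i)$. If you want to keep your structure, replace both problematic steps by an appeal to that classification (noting, as the paper implicitly does, the normalization needed to pass from the projective identity $G^r=I$ to the matrix identity).
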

 
\begin{proof}
The first part of the proof follows a similar argument of the proof of Proposition 2, from \cite{selfdual}.

If $F$ is symmetric, then $0=F(A_{i+m\pm \ell}, A_i)= \langle B_{i+2m \pm \ell}^*, A_i\rangle$. \\
Since any $(k+1)$-consecutive vertices of $P$ are linearly independent then, 
	\[
	\bigcap_{\substack{0\leq \ell \leq k-1 \\ \ell \equiv k-1 (\text{mod } 2)}} B_{i+2m \pm \ell} = \{ A_{i+2m} \},
	\]

meaning $A_i=A_{i+2m}$ for every subindex $i$. But $P$ is a simple polygon, therefore $m=n$.\\

On the other hand, if $n=m$, according to Lemma \ref{classify}, $$F=\bigoplus_{j=1}^{\mathcal{S}} H_2(e^{i\theta_j})\bigoplus I_{k+1-2{\mathcal{S}}},$$ with $F^{-1}F^{\mathsmaller T}=I$. \\

Notice that 

$$H_2(\mu)^{-1}H_2(\mu)^{\mathsmaller T} = \begin{pmatrix}
	\mu^{-1} & 0 \\
	0 & \mu
	\end{pmatrix} = \begin{pmatrix}
	1 & 0 \\
	0 & 1
	\end{pmatrix}, $$ 

if and only if $ \mu = 1$. But the classification from Lemma \ref{classify} indicates that $\mu\neq 1$, so there is an appropriate basis of $\mathbb{C}^{k+1}$ such that $F=I_{k+1}$.
	\end{proof}

Therefore if $m=n$, then the projective duality becomes the polar duality:
$$A=(a_0, \ldots,  a_k) \mapsto A^\perp = \{ (x_0, \ldots , x_k): a_0x_0 + \ldots + a_kx_k =0\}.$$
In particular, since $ F(A_i , A_{i+n \pm \ell}) = 0$  for all $\ell = k-1, k-3, \ldots \geq 0$ and any subindex $i$, it follows that $ A_{i+n \pm \ell}  \in A_i^{\perp} $. \\

We are now ready to prove our first result regarding the dimension of ${\cal M}_{m,n,k}$ when $m=n$, Theorem \ref{m=n}, stated in the introduction of the paper.

\begin{proof} (Of Theorem \ref{m=n})
Notice that since $m=n$, then $n\equiv k+1 \pmod 2$. Thus, the expression $k(n-k-1)/2$ is an integer number. \\

We distinguish 3 types of vertices:

	\begin{itemize}
		\item The vertices ``before'' $A_1^\perp$: $A_1, \ldots A_{n-k}$ (there are in total $\frac{n-k+1}{2}$ such vertices),
		\item The vertices in $A_1^\perp$:  $A_{n-k+2}, \ldots, A_{n+k}$ ($k$ vertices total), and
		\item The vertices ``after'' $A_1^\perp$: $A_{n+k+2}, \ldots A_{2n-1}$ (a total of $\frac{n-k-1}{2}$).
	\end{itemize}

Notice
$$\dim {\cal M}_{m,n,k} = \substack{\text{Degrees of freedom} \\ \text{we have to choose} \\A_1, A_3, \ldots A_{2n-1}} - \substack{\text{Dimension of group of} \\ \text{linear transformations}\\ \text{that preserve } F.  }$$

To calculate the degree of freedom in the choice of vertices $A_1, \ldots, A_{n-k}$:
	\begin{itemize}
		\item First, choose the vertices $A_1, \ldots , A_{n-k}$. Each one of these vertices can be chosen with degree of freedom $k$, because the sets $A_1^\perp, A_3^\perp, \ldots A_{n-k}^\perp$ don't contain any of these vertices. This means that we can choose this family of vertices from a set of dimension $\frac{k(n-k+1)}{2}$.
		\item Notice each vertex $A_{n-k + 2\ell}$ (with $\ell=1, \ldots, k$) must satisfy:
			$$A_{n-k+2\ell} \in \bigcap_{j=1}^{\ell} A_{2j -1}^\perp.$$
			
			Thus each $A_{n-k+2\ell}$ can be chosen from a $(k-\ell)$-dimensional set, and therefore this family can be chosen with degree of freedom: $$(k-1)+ (k-2) + \ldots +2 + 1 = \dfrac{k(k-1)}{2}.$$
			
		\item Finally, notice the vertices $A_{n+k+ 2\ell}$ (with $\ell=1, \ldots, \frac{n-k-1}{2}$) are completely determined once we chose the previous vertices, since
			$$A_{n+k+2\ell} \in \bigcap_{j=1}^{k} A^{\perp}_{2\ell + 2j-1}.$$
	\end{itemize}

Hence, the vertices $A_1, \ldots, A_{2n-1}$ can be chosen with degree of freedom:

$$\frac{k(n-k+1)}{2}+ \dfrac{k(k-1)}{2} = \dfrac{kn}{2}.$$

On the other hand, the orthogonal group $O(k+1, \mathbb{C})$ is precisely the one that preserves $F$. The dimension of $O(k+1, \mathbb{C})$ is $\dfrac{k(k+1)}{2}$.

Then

$$\dim {\cal M}_{m,n,k} = \dfrac{kn}{2} - \dfrac{k(k+1)}{2} = \dfrac{k(n-k-1)}{2}.$$

\end{proof}


\subsection{$F$ not symmetric}

If $P \in {\cal M}_{m,n,k}$ with $m\neq n$, then by Lemma \ref{classify} and Proposition \ref{n=m-form}, the corresponding bilinear form $F$ looks like

\begin{equation} \label{F}
F=\bigoplus_{j=1}^{\mathcal{S}} H_2(e^{i\theta_j})\bigoplus I_{k+1-2{\mathcal{S}}}.
\end{equation}

Together with this bilinear form we have a projective transformation $G$ (Definition \ref{g}). By Lemma \ref{rotation}, we know that $G$ acts on the vertices of $P$ by a cyclic permutation, with $G(A_i)=A_{i+2m}$. And if $r= \frac{n}{(m,n)}$, then $G^r=I$.\\

This means that the vertices of $P$ form $(m,n)$-different $r$-gons $P_i$ with vertices $$P_i= (A_i, A_{i+2m}, A_{i+4m}, \ldots , A_{i+2(r-1)m}).$$ By the extended Euclidean algorithm, $A_{1+2(m,n)}$ is also part of this list. Moreover, one can rearrange the list such that the subindices are in increasing order that is

 $$P_i= (A_i, A_{i+2(m,n)}, A_{i+4(m,n)}, \ldots , A_{i+ 2(r-1)(m,n)}),$$
with each $P_i$ completely determined by the choice of $A_i$. This observation implies Theorem \ref{(m,n)=1} stated in the introduction.\\

But if $(m,n)\neq 1$, then

$$\dim {\cal M}_{m,n,k} = \substack{\text{Degrees of freedom} \\ \text{we have to choose} \\A_1, A_3, \ldots A_{2(m,n)-1}} - \substack{\text{Dimension of group of} \\ \text{linear transformations}\\ \text{that preserve } F.  }$$

The following two subsections will explore how to count each summand of the previous equation. First, we count the degrees of freedom to choose the first $(m,n)$ vertices of $P$ for any $m$ and $n$. The second subsection deals with the dimension of the linear transformations preserving the bilinear form $F$ associated with $P$ for particular cases of $m$ and $n$.

\subsubsection{Degrees of freedom on the vertices}\label{novela}

The way in which the vertices $A_1, A_{1+2(m,n)}, \ldots , A_{1+2(r-1)(m,n)}$ and their perpendicular spaces $A^\perp_1, A^\perp_{1+2(m,n)}, \ldots , A^\perp_{1+2(r-1)(m,n)}$ are distributed depends on the parity of $m/(m,n)$:

\begin{itemize}
	\item If $m/(m,n)$ is odd, then for each $i$ there is a $j$ such that $A^{\perp}_{1+2i(m,n)}$ is ``centered between'' $A_{1+2j(m,n)}$ and $A_{1+2(j+1)(m,n)}$.
	
	\item If $m/(m,n)$ is even, then for each $i$ there is a $j$ such that $A^\perp_{1+2i(m,n)}$ is ``centered at'' $A_{1+2j(m,n)}$. 
\end{itemize}

See Figure \ref{types-drama}.\\

\textbf{Note:} If $k$ is even, then $m$ must be odd, so the second case ($m/(m,n)$ even) doesn't happen.

\begin{defi}\label{drama-defi}
Let $\boldsymbol{D(m,n,k)}$ denote the number of vertices of the form $A_{1+2j(m,n)}$ that are contained in $A_1^\perp$.
\end{defi}

For instance, Figure \ref{types-drama} shows $D(4, 12, 3)= 0$, while $D(8, 12, 3)= 1$.  \\

\begin{center}
	\begin{minipage}{7cm}
	\includegraphics[scale=0.33]{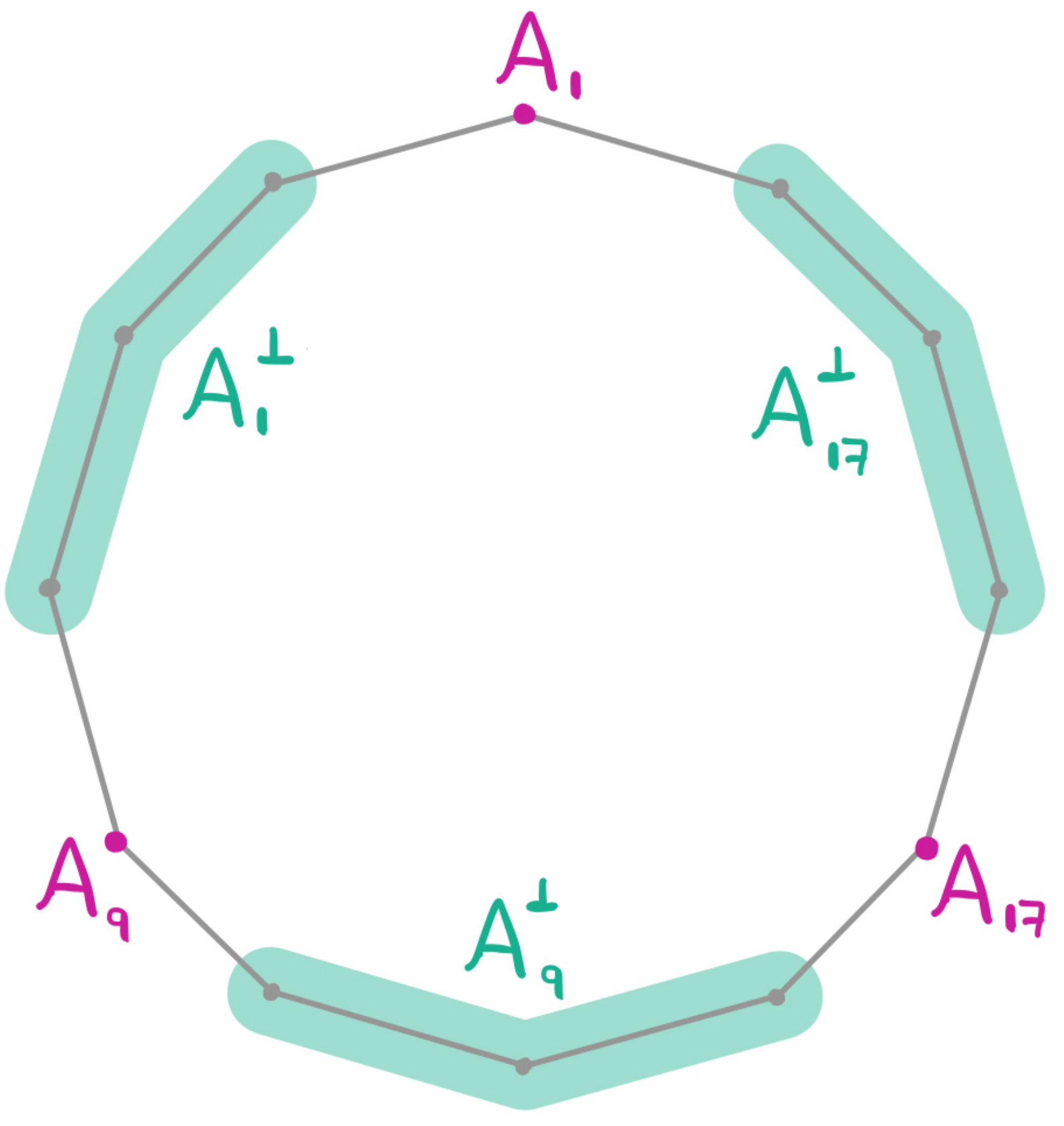}
	\end{minipage}
	\begin{minipage}{7cm}
	\includegraphics[scale=0.33]{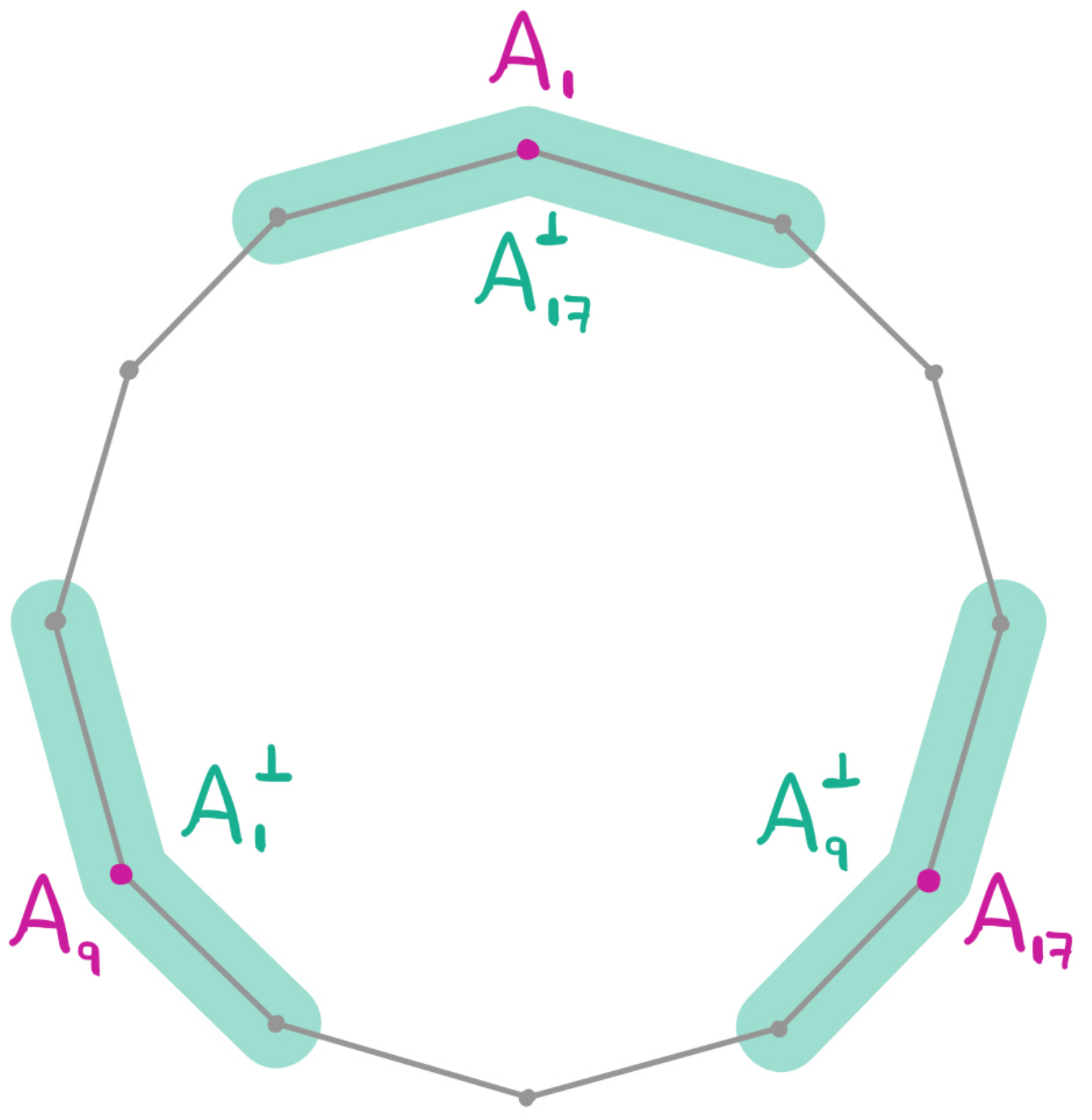}
	\end{minipage}
\figcaption{The picture on the left depicts a $4$-self-dual $12$-gon in $\mathbb{CP}^3$, so $m/(m,n)=1$ is odd. The image on the right shows an $8$-self-dual $12$-gon in $\mathbb{CP}^3$, so $m/(m,n)=2$ is even. The vertices in the shaded areas span the corresponding $A_i^\perp$.}\label{types-drama}
\end{center}

Before deriving a direct formula to compute $D(m,n,k)$, the following proposition shows how this quantity is closely related to the degrees of freedom we have for choosing the vertices $A_1, A_3, \ldots , A_{2(m,n)-1}$.\\

\begin{prop}\label{freedom} Given $P=(A_1, \ldots , A_{2n-1}) \in {\cal M}_{m,n,k}$, the degree of freedom to choose the first $(m,n)$ vertices of $P$ is $$\dfrac{(m,n)(k+D(m,n,k))}{2}.$$
\end{prop}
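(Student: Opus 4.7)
The plan is to parametrize polygons in ${\cal M}_{m,n,k}$ by their first $(m,n)$ vertices and then count the resulting degrees of freedom after imposing the perpendicularity conditions of self-duality. By Lemma \ref{rotation}, once $A_1, A_3, \ldots, A_{2(m,n)-1}$ are fixed, the remaining vertices are recovered via $A_{2i-1+2jm} = G^j(A_{2i-1})$, so these $(m,n)$ vertices carry all the moduli information and contribute $(m,n)k$ ambient degrees of freedom in $(\mathbb{CP}^k)^{(m,n)}$.

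The central tool would be the identity $F(u, Gv) = F(v, u)$, which is immediate from $G = F^{-1}F^{\mathsmaller T}$ (equivalently $FG = F^{\mathsmaller T}$). Coupled with the $G$-invariance $F(Gu, Gv) = F(u, v)$, this produces an equivalence relation on the perpendicularity constraints $F(A_i, A_j) = 0$ with $j \in \{i+m-(k-1), \ldots, i+m+(k-1)\}$: the $G$-orbit of $(i, j)$ and the $G$-orbit of $(j-2m, i)$ encode the same equation in the first vertices, so they should be counted only once.

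I would then split the constraints into \emph{intra-orbit} (or \emph{self}) constraints, where $A_i$ and $A_j$ lie in the same $G$-orbit, and \emph{inter-orbit} constraints. By the definition of $D(m,n,k)$, each $G$-orbit contributes exactly $D$ self-constraint classes mod $G$, corresponding precisely to those $\ell \in \{-(k-1),\ldots,k-1\}$ with $m+\ell \equiv 0 \pmod{2(m,n)}$. A case analysis based on the parity of $\mu = m/(m,n)$ (the dichotomy of Figure \ref{types-drama}) shows that the swap-and-shift involution fixes the self-constraint classes while pairing the inter-orbit classes $\ell \leftrightarrow -\ell$ across different orbit-pairs. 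Book-keeping this yields $(m,n)\,\tfrac{k-D}{2}$ independent equations on the first $(m,n)$ vertices, so subtracting from the ambient $(m,n)k$ gives the claimed $\tfrac{(m,n)(k+D)}{2}$ degrees of freedom.

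The main technical obstacle is the clean organization of how the two equivalences (the $G$-action and the swap-and-shift identity) interact on the set of constraint pairs, in particular verifying which classes are fixed by the involution and which get paired, separately in the $\mu$ odd and $\mu$ even situations. This is precisely where the explicit formula for $D(m,n,k)$ from Lemma \ref{compute-drama} in subsection \ref{novela} plugs in, and it is what makes the final count collapse to the single closed-form expression $\tfrac{(m,n)(k+D(m,n,k))}{2}$.
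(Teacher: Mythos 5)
Your overall strategy---parametrize by the $(m,n)$ orbit representatives, count $(m,n)k$ ambient parameters, and subtract the number of independent perpendicularity constraints, using $F(u,Gv)=F(v,u)$ together with the $G$-invariance of $F$ to identify constraints in pairs---is the same strategy as the paper's, and your treatment of the inter-orbit constraints (pairing $\ell\leftrightarrow-\ell$ across orbit pairs) is actually more careful than the paper's one-line appeal to ``$A_i\in A_j^{\perp}\Leftrightarrow A_j\in A_i^{\perp}$,'' which as stated is false for a non-symmetric $F$ and really needs exactly the identity you invoke.

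The gap is in your handling of the self-constraints, and it sits precisely where the paper is also silent. By your own accounting each orbit carries $D$ self-constraint classes and $k-D$ inter-orbit classes; if, as you assert, the involution \emph{fixes} the self-constraint classes and \emph{pairs} the inter-orbit ones, the total number of constraint classes is $(m,n)\bigl(D+\tfrac{k-D}{2}\bigr)=(m,n)\tfrac{k+D}{2}$, and subtracting from $(m,n)k$ gives $(m,n)\tfrac{k-D}{2}$ degrees of freedom---the complement of the answer you want. To land on $(m,n)\tfrac{k+D}{2}$ you need the $D$ self-constraints per orbit to impose \emph{no} conditions at all, and you neither state nor justify this. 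It is not automatic: a self-constraint reads $F(A_i,G^jA_i)=0$, i.e. $A_i^{\mathsmaller T}(FG^j)A_i=0$, which is vacuous only when $FG^j$ is skew-symmetric; this holds in the $m=0$ case (where $F=\Omega_{k+1}$ and $F(A_i,A_i)=0$ is free), but for $F=\Omega_{2f}\oplus I_{k+1-2f}$ with $2f<k+1$ the condition $F(A_i,A_i)=0$ is a genuine nontrivial quadric on $A_i$. Moreover, your claim that the involution fixes every self-constraint class is not right: on the self-classes it acts by $j\mapsto 1-j$ on the exponent of $G$, which is fixed-point-free whenever $r=n/(m,n)$ is even and hence pairs the $D$ self-conditions among themselves rather than fixing them. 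The paper's own proof sidesteps all of this by declaring that each $A_i$ imposes only $k-D$ restrictions ``over the rest of the vertices,'' i.e. by not counting the self-restrictions at all; whichever route one takes, the justification for discarding (or trivializing) the self-constraints is the missing step, and it is the step on which the formula $\tfrac{(m,n)(k+D)}{2}$ actually hinges.
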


\begin{proof}
Each vertex $A_i$ can, in principle, be chosen from a Zariski (non-empty) open set of $\mathbb{CP}^k$. But each $A_i$ also determines the perpendicular space $A_i^\perp= \text{span}\{A_{i-(k-1)}, \ldots , A_{i+(k-1)}\}$, imposing $k-D(m,n,k)$ restrictions over the rest of the vertices. Nevertheless, only half of these restrictions count as $A_i\in A_j^\perp  \Leftrightarrow A_{j} \in A_{i}^\perp$ for each $i,j$. \\

Hence, the vertices $A_1, \ldots A_{2(m,n)-1}$ can be chosen from a space of dimension:

$$(m,n)\cdot k - \dfrac{(m,n)(k-D(m,n,k))}{2}= \dfrac{(m,n)(k+D(m,n,k))}{2}.$$
\end{proof}

And here is a formula to compute $D(m,n,k)$:

\begin{lem}\label{compute-drama} The value of $D(m,n,k)$ (Definition \ref{drama-defi}) is given by:
	\begin{itemize}
		\item If $m/(m,n)$ is odd, then $D(m,n,k)= 2\left\lfloor \dfrac{k}{2(m,n)} + \dfrac{1}{2} \right\rfloor $.
		\item If $m/(m,n)$ is even, then $D(m,n,k)= 2\left\lfloor \dfrac{k}{2(m,n)} \right\rfloor +1$.
	\end{itemize}
\end{lem}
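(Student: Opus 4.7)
The approach is a direct counting argument. By definition, $A_1^\perp = B_{1+m}$ consists exactly of the $k$ vertices $A_{1+m+j}$ with
$$j \in J_k := \{-(k-1), -(k-3), \ldots, k-3, k-1\},$$
an arithmetic progression of length $k$ whose elements all have the parity of $k-1$. A vertex $A_{1+m+j}$ is of the form $A_{1+2i(m,n)}$ precisely when $1+m+j \equiv 1 \pmod{2d}$, where $d := (m,n)$; equivalently $j \equiv -m \pmod{2d}$. So the first step is to write $m = da$ with $(a, n/d)=1$ and reduce $-m$ modulo $2d$ according to the parity of $a$: if $a$ is odd then $-m \equiv d \pmod{2d}$, while if $a$ is even then $-m \equiv 0 \pmod{2d}$.

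The second step is to count, in each case, the elements of $J_k$ that lie in the corresponding residue class. In the odd case the valid $j$'s are precisely the integers of the form $d(2t+1)$ lying in $[-(k-1), k-1]$; note the compatibility check that $m \equiv k-1 \pmod 2$ together with $a$ odd forces $d$ to share the parity of $k-1$, which is exactly what is needed for such $j$ to actually belong to $J_k$. In the even case the valid $j$'s are integers of the form $2dt$ lying in $[-(k-1), k-1]$, and here $a$ even forces $k$ odd, so $J_k$ consists of even integers and $2dt\in J_k$ is automatic. Counting gives, before simplification,
$$D(m,n,k) = \begin{cases} 2\left\lfloor \dfrac{k-1+d}{2d}\right\rfloor & a \text{ odd}, \\[4pt] 2\left\lfloor \dfrac{k-1}{2d}\right\rfloor + 1 & a \text{ even}. \end{cases}$$

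The third step is to replace $k-1$ by $k$ inside the floor to match the form of the lemma. This uses the parity observations made above: in the odd case $k+d$ is odd, hence not divisible by the even number $2d$, so $\lfloor (k+d)/(2d)\rfloor = \lfloor(k-1+d)/(2d)\rfloor$ and the first expression becomes $2\lfloor k/(2d)+1/2\rfloor$. In the even case $k$ is odd, hence not divisible by $2d$, so $\lfloor k/(2d)\rfloor = \lfloor (k-1)/(2d)\rfloor$ and the second expression becomes $2\lfloor k/(2d)\rfloor + 1$. These are precisely the two formulas stated in the lemma.

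I do not expect a genuine obstacle: the content is a bookkeeping exercise. The only point that requires a bit of care is the final floor simplification, where one has to exploit the parity constraint $m \equiv k-1 \pmod 2$ together with the parity of $a$ to ensure the relevant integer is \emph{not} a multiple of $2d$; without this, the floor could shift by $1$ and the stated closed form would fail.
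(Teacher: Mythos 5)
Your proof is correct, and it follows a noticeably different route from the paper's. The paper argues geometrically: it sandwiches $D(m,n,k)$ via the inequalities $(D-1)(m,n)+1 \leq k \leq (D+1)(m,n)+1$, i.e.\ $D-1 < k/(m,n) < D+1$, observes from the ``centered at'' versus ``centered between'' picture (Figure \ref{drama}) that $D$ must be odd or even according to the parity of $m/(m,n)$, and concludes that $D$ is the unique integer of that parity within distance one of $k/(m,n)$. You instead perform an explicit residue-class count: you identify the spanning vertices of $A_1^\perp=B_{1+m}$ as $A_{1+m+j}$ with $j$ ranging over an arithmetic progression of length $k$, characterize membership in the orbit of $A_1$ by the congruence $j\equiv -m \pmod{2(m,n)}$, reduce $-m$ according to the parity of $a=m/(m,n)$, and count directly. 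The two approaches encode the same dichotomy (your residue $d$ versus $0$ is exactly the paper's ``centered between'' versus ``centered at''), but yours is the more airtight execution: the paper's passage from the sandwich to ``the closest even (resp.\ odd) number to $k/(m,n)$'' leaves the boundary case where $k/(m,n)$ is an integer of the opposite parity implicit, whereas your count together with the final floor manipulation (justified by the parity constraint $m\equiv k-1 \pmod 2$, which is precisely the point you flag) handles it explicitly. Your calculation also reproduces the paper's sanity checks $D(4,12,3)=0$ and $D(8,12,3)=1$, so I see no gap.
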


\begin{proof} Notice that $D(m,n,k)$ should satisfy $$(D(m,n,k)-1)(m,n)+1 \leq k \leq (D(m,n,k)+1)(m,n) +1,$$ 

with the first inequality coming from the fact that $A_1^\perp$ contains at least $D(m,n,k)$-vertices of the form $A_{1+2\ell(m,n)}$, and the second one indicating $A_1^\perp$ doesn't contain more than $D(m,n,k)+2$ of such vertices (see Figure \ref{drama}). 

	\begin{center}
	\includegraphics[scale=0.45]{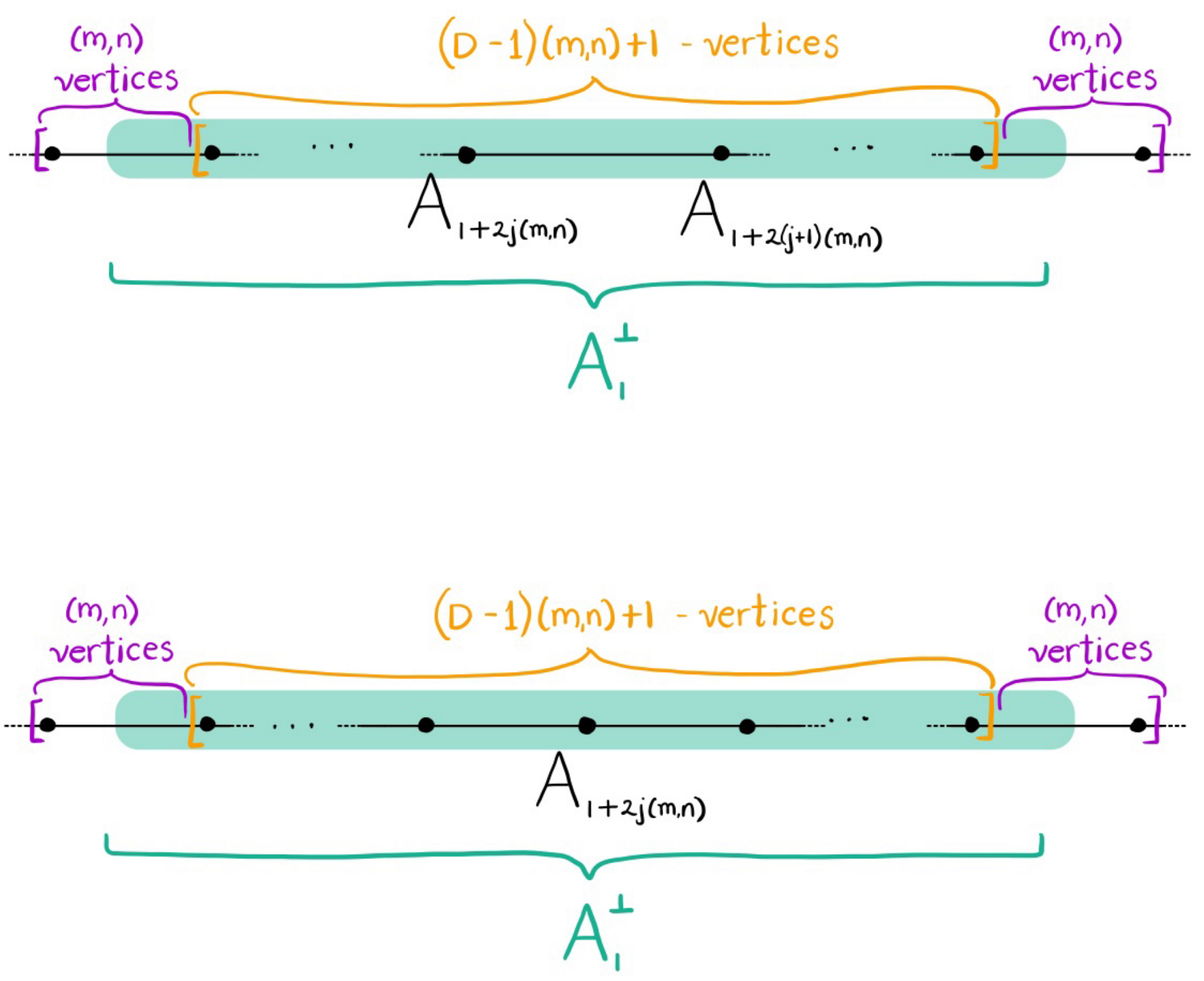}
	\figcaption{The picture above shows the distribution of the vertices of the form $A_{1+2i(m,n)}$ when $m/(m,n)$ is odd, while the figure below shows the case when $m/(m,n)$ is even.} \label{drama}
	\end{center}

Equivalently

\begin{equation} \label{dramamometer}
D(m,n,k)-1 < \dfrac{k}{(m,n)} < D(m,n,k)+1.
\end{equation}

Now, depending on the parity of $m/(m,n)$:
	\begin{itemize}
		\item If $m/(m,n)$ is odd, since $A_1^\perp$ is centered between some $A_{1+2j(m,n)}$ and $A_{1+2(j+1)(m,n)}$, then $D(m,n,k)$ must be an even number. Equation (\ref{dramamometer}) indicates $D(m,n,k)$ must be the closest even number to $k/(m,n)$, and hence:
		$$D(m,n,k) = 2\left\lfloor \dfrac{k}{2(m,n)} + \dfrac{1}{2} \right\rfloor. $$
		
		\item On the other hand, if $m/(m,n)$ is even, then $A_1^\perp$ is centered at some $A_{1+2j(m,n)}$ so $D(m,n,k)$ is odd. From equation (\ref{dramamometer}), we deduce $D(m,n,k)$ is the closest odd number to $k/(m,n)$, thus:
		$$D(m,n,k)= 2\left\lfloor \dfrac{k}{2(m,n)} \right\rfloor +1.$$
	\end{itemize}

\end{proof}

\subsubsection{Degrees of restriction given by $F$}

To compute the dimension of the group of linear transformations that preserve a bilinear form $F$ (when it is not symmetric), we restrict ourselves to two particular cases: $r=n/(m,n)=2$ and $r=3$. The general case is still a work in progress. \\

\textbf{Case 1:} $\boldsymbol{n=2m}$\\

If $n=2m$, then $r=n/(m,n)=2$ (the dodecagon illustrated in Figure \ref{r=2-example} is an example of this case). Therefore $G^2=I$,  and hence there is a basis for which the matrix representation of the bilinear form F is:

\begin{equation}\label{simpleF}
F= \Omega_{2f} \bigoplus I_{k+1-2f},
\end{equation}

where $\Omega_{2f}= \begin{pmatrix}
0 & I \\
-I & 0
\end{pmatrix}$, with $I$ the identity matrix of dimension $f\times f$, and $2f< k+1$.

\begin{lem}\label{restriction-r2} The Lie group of the linear transformations that preserve $F$ written with respect to the basis in which $F$ is expressed in Formula (\ref{simpleF}) is 
$$\left\lbrace g = g_s \oplus g_o \vert \hspace{0.2cm} g_s \in \text{Sp}(2f, \mathbb{C}), g_o\in \text{O}(k+1-2f, \mathbb{C}) \right\rbrace.$$
This group has dimension $4f^2-2kf + {k+1 \choose 2}$. 
\end{lem}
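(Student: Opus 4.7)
The plan is to exploit the fact that any matrix $g$ preserving $F$ must also preserve both the symmetric and antisymmetric parts of $F$ separately, and then argue that these two parts force $g$ to be block diagonal in the given basis.

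First I would write $F = F_s + F_a$ with $F_s = \tfrac{1}{2}(F + F^{\mathsmaller T})$ and $F_a = \tfrac{1}{2}(F - F^{\mathsmaller T})$. In the basis of Formula (\ref{simpleF}), these are exactly $F_s = 0_{2f} \oplus I_{k+1-2f}$ and $F_a = \Omega_{2f} \oplus 0_{k+1-2f}$. Since $g^{\mathsmaller T} F g = F$ implies $g^{\mathsmaller T} F^{\mathsmaller T} g = F^{\mathsmaller T}$ by transposition, $g$ preserves both $F_s$ and $F_a$.

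Next I would use the kernels. The kernel of $F_s$ is $V_s := \mathbb{C}^{2f} \oplus 0$ and the kernel of $F_a$ is $V_o := 0 \oplus \mathbb{C}^{k+1-2f}$; these subspaces are complementary. Any $g$ preserving $F_s$ must preserve $\ker F_s = V_s$, and any $g$ preserving $F_a$ must preserve $\ker F_a = V_o$. Hence $g$ stabilizes both $V_s$ and $V_o$, which forces $g = g_s \oplus g_o$ with $g_s \in \text{GL}(2f, \mathbb{C})$ and $g_o \in \text{GL}(k+1-2f, \mathbb{C})$. Substituting this block-diagonal form back into $g^{\mathsmaller T} F g = F$ separates into $g_s^{\mathsmaller T} \Omega_{2f} g_s = \Omega_{2f}$ and $g_o^{\mathsmaller T} g_o = I_{k+1-2f}$, i.e.\ $g_s \in \text{Sp}(2f, \mathbb{C})$ and $g_o \in \text{O}(k+1-2f, \mathbb{C})$. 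The reverse inclusion is immediate.

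For the dimension, I would use the classical values $\dim \text{Sp}(2f,\mathbb{C}) = f(2f+1)$ and $\dim \text{O}(k+1-2f,\mathbb{C}) = \binom{k+1-2f}{2}$. A direct expansion
\[
f(2f+1) + \binom{k+1-2f}{2} = 4f^2 - 2kf + \binom{k+1}{2}
\]
finishes the argument. No serious obstacle is expected; the only subtle point is the opening observation that preserving $F$ automatically implies preserving its transpose, which is what cleanly separates the symmetric and antisymmetric structures and makes the block decomposition unavoidable.
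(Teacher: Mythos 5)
Your proposal is correct and follows essentially the same route as the paper: decompose $F$ into its symmetric and skew-symmetric parts, show that $g$ must preserve each separately, deduce the block-diagonal form, and add $\dim \text{Sp}(2f,\mathbb{C}) = f(2f+1)$ to $\dim \text{O}(k+1-2f,\mathbb{C}) = \binom{k+1-2f}{2}$. Your use of the invariance of $\ker F_s$ and $\ker F_a$ to force block-diagonality is a slightly cleaner justification than the paper's direct matrix-block computation, but it is a minor variation rather than a different method.
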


\begin{proof}
Write $F$ as $F=F_+ + F_-$, where $F_+$ is symmetric and $F_-$ is skew-symmetric (notice this decomposition is unique). A linear transformation $g$ preserves $F$ if and only if:
$$g^{\mathsmaller T}Fg= F.$$
Thus:
$$g^{\mathsmaller T}F_+g + g^{\mathsmaller T}F_-g = F = F_+ + F_-.$$
Notice $g^{\mathsmaller T}F_+g$ is symmetric, while $g^{\mathsmaller T}F_-g$ is skew-symmetric. Since the decomposition as sum of symmetric and skew-symmetric parts is unique, one gets:

$$g^{\mathsmaller T}F_+g = F_+  \hspace{0.5cm}\text{ and }\hspace{0.5cm} g^{\mathsmaller T}F_-g = F_-.$$

Since $F_+ = 0 \bigoplus I_{k+1-2f}$ and $F_-= \Omega_{2f} \bigoplus 0$, by looking at the symmetric part we know that $$g= \begin{pmatrix}
A & 0 \\
B & D
\end{pmatrix},$$ where $A$, $0$, $B$ and $D$ are block matrices, with $D$ an orthogonal matrix. Adding the information from the skew-symmetric part, we conclude that $$g= \begin{pmatrix}
A & 0 \\
0 & D
\end{pmatrix},$$ where $A\in \text{Sp}(2f, \mathbb{C}), D \in \text{O}(k+1-2f, \mathbb{C})$.

Therefore the dimension of the group of linear transformations preserving the bilinear form (\ref{simpleF}) is: $$f(2f+1)+ \dfrac{(k+1-2f)(k-2f)}{2} = 4f^2-2kf + {k+1 \choose 2}.$$
\end{proof}

We now proceed with the proof of Theorem \ref{r=2}, which describes the dimension of ${\cal M}_{m, 2m, k}$.

\begin{proof} (Of Theorem \ref{r=2}.) Recall that
$$\dim {\cal M}_{m,2m,k} = \substack{\text{Degrees of freedom} \\ \text{we have to choose} \\A_1, A_3, \ldots A_{2m-1}} - \substack{\text{Dimension of group of} \\ \text{linear transformations}\\ \text{that preserve } F.  }$$

According to Proposition \ref{freedom}:
$$\substack{\text{Degrees of freedom} \\ \text{we have to choose} \\A_1, A_3, \ldots A_{2m-1}}= \dfrac{m(k+D(m,2m,k))}{2}.$$

From Lemma \ref{compute-drama}, since $m/(m,n)=1$ is odd, $D(m,2m,k) = 2 \left\lfloor \frac{k}{2m} + \frac{1}{2} \right\rfloor$. Recall $k+3\leq n =2m$, so $0<k<2m$. This is from where we derive two cases:
$$\begin{cases}
0<k<m \hspace{0.6cm}\Rightarrow \hspace{0.2cm}D(m,2m,k)=0, \\
m<k<2m \hspace{0.26cm}\Rightarrow \hspace{0.2cm} D(m,2m,k)=2 .
\end{cases}$$

Thus
$$\substack{\text{Degrees of freedom} \\ \text{we have to choose} \\A_1, A_3, \ldots A_{2m-1}}= \begin{cases} 
\dfrac{mk}{2}\hspace{1.2cm} \text{ if } k<m,\\
\dfrac{m(k+2)}{2} \hspace{0.23cm} \text{ if } k>m.
\end{cases}$$

By Lemma \ref{restriction-r2},  
$$\substack{\text{Dimension of group of} \\ \text{linear transformations}\\ \text{that preserve } F.  }= 4f^2 -2kf + {k+1 \choose 2}.$$

Let ${\cal M}_f$ denote the space of $m$-self-dual $2m$-gons with bilinear form $F= \Omega_{2f}\bigoplus I_{k+1-2f}$. Therefore

$$\dim {\cal M}_f = \begin{cases}
\dfrac{mk}{2} - \left( 4f^2 -2kf + {k+1 \choose 2} \right) \hspace{1cm}\text{ if } k<m, \\
\dfrac{m(k+2)}{2} - \left(4f^2 -2kf + {k+1 \choose 2} \right) \text{ if } k>m.
\end{cases}$$

To find the dimension of the largest component, we must minimize $4f^2 -2kf + {k+1 \choose 2}$ as a function of $f$. Straightforward calculations show this expression attains its minimum when $f=k/4$. If $f_0$ is the closest integer to $k/4$, then
$$f_0 = \begin{cases} \frac{k}{4} \hspace{0.6cm} \text{ if } k\equiv 0 \pmod 4, \\ 
\frac{k\pm1}{4} \hspace{0.2cm}\text{ if } k\equiv \pm 1 \pmod 4,\\
\frac{k\pm 2}{4} \hspace{0.2cm}\text { if } k\equiv 2 \pmod 4.
\end{cases}$$ 

Thus,

$$4f_0^2-2kf_0+{k+1 \choose 2} = \begin{cases} \frac{k(k+2)}{4} \hspace{0.6cm} \text{ if } k\equiv 0 \pmod 4 ,\\ 
\frac{(k+1)^2}{4} \hspace{0.6cm}\text{ if } k\equiv \pm 1 \pmod 4,\\
\frac{k^2+2k+4}{4} \hspace{0.3cm} \text { if } k\equiv 2 \pmod 4.
\end{cases}$$ 

This provides the dimensions listed in the statement of Theorem \ref{r=2}.
\end{proof}


\textbf{Case 2:} $\boldsymbol{r=3}$\\

The polygon shown in Figure \ref{r=3-example} is an example of this case, when $r= n/ (m,n) =3$. If $r=3$, then $G^3=I$, and so

\begin{equation}
F= \bigoplus_{j=1}^{s_1}H_2(e^{i\theta}) \bigoplus_{j=1}^{s_2} H_2(e^{-i\theta}) \bigoplus I_{k+1-2(s_1+s_2)},
\end{equation}

where $\theta=2\pi/3$. Reordering the basis, one can express $F$ as:

\begin{equation}\label{newF}
F= \begin{pmatrix}
0 & W \\ e^{-i\theta}W^{\mathsmaller T} & 0
\end{pmatrix} \bigoplus I_{k+1-2(s_1+s_2)},
\end{equation} 

where $W=\begin{pmatrix} 0 & I_{s_2} \\ e^{i\theta}I_{s_1} & 0 \end{pmatrix}$ and $I_{s_i}$ is the identity block matrix of size $s_i\times s_i$.

\begin{prop}\label{restriction-r3} The Lie group of the linear transformations that preserve $F$, written with respect to the basis in which $F$ is expressed in Formula (\ref{newF}), is: 
$$\left\lbrace h_1 \oplus h_2 \oplus g_o \vert \hspace{0.2cm} h_1 \in \text{Gl}(s_1+s_2, \mathbb{C}), h_2=W^{-1}h_1^{-\mathsmaller T}W, g_o\in \text{O}(k+1-2(s_1+s_2), \mathbb{C}) \right\rbrace.$$
This group has dimension $3(s_1+s_2)^2-(2k+1)(s_1+s_2) + {k+1 \choose 2}$. 
\end{prop}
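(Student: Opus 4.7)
The strategy parallels that of Lemma \ref{restriction-r2}, but because the bilinear form $F$ in equation (\ref{newF}) no longer splits as cleanly into symmetric and skew-symmetric parts, I would use a different leading observation: any linear transformation $g$ preserving $F$ must commute with the matrix $H := F^{-1}F^{\mathsmaller T}$ that represents the projective transformation $G$ of Definition \ref{g}. The verification is short: $g^{\mathsmaller T}Fg = F$ yields $Fg = g^{-\mathsmaller T}F$, and taking transposes gives $F^{\mathsmaller T}g = g^{-\mathsmaller T}F^{\mathsmaller T}$; substituting these two identities into $Hg = F^{-1}F^{\mathsmaller T}g$ produces $Hg = gH$.

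Next, I would compute $H$ directly from (\ref{newF}). Setting $s := s_1+s_2$, a brief block inversion of $F_1 = \begin{pmatrix}0 & W \\ e^{-i\theta}W^{\mathsmaller T} & 0\end{pmatrix}$ yields $F_1^{-1}F_1^{\mathsmaller T} = e^{i\theta}I_s \oplus e^{-i\theta}I_s$, so
\[
H \;=\; e^{i\theta}I_s \;\oplus\; e^{-i\theta}I_s \;\oplus\; I_{k+1-2s}.
\]
Since $\theta = 2\pi/3$, the three eigenvalues are pairwise distinct, and the eigenspaces are exactly the three coordinate blocks. Because $g$ commutes with this diagonal $H$, it must preserve each eigenspace, and so $g = h_1 \oplus h_2 \oplus g_o$ with $h_1, h_2 \in \text{Gl}(s, \mathbb{C})$ and $g_o \in \text{Gl}(k+1-2s, \mathbb{C})$.

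Plugging this block form into $g^{\mathsmaller T}Fg = F$ then reads off the remaining constraints block by block. The bottom-right block gives $g_o^{\mathsmaller T} g_o = I_{k+1-2s}$, so $g_o \in \text{O}(k+1-2s, \mathbb{C})$. In the top-left $2s\times 2s$ region, the two diagonal blocks give $0=0$ tautologically, while the two off-diagonal blocks are transposes of each other and both collapse to the single equation $h_1^{\mathsmaller T} W h_2 = W$, i.e.\ $h_2 = W^{-1}h_1^{-\mathsmaller T}W$. This gives precisely the parameterization in the statement. For the dimension, $h_1$ is free in $\text{Gl}(s,\mathbb{C})$ and contributes $s^2$; $h_2$ is determined by $h_1$; and $g_o$ contributes $\dim \text{O}(k+1-2s,\mathbb{C}) = \tfrac{(k+1-2s)(k-2s)}{2}$. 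Expanding the sum $s^2 + \tfrac{(k+1-2s)(k-2s)}{2}$ yields $3(s_1+s_2)^2 - (2k+1)(s_1+s_2) + {k+1 \choose 2}$.

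The only step demanding any real attention is the commutation identity $Hg = gH$; once that is in hand, the eigendecomposition of $H$ forces the block-diagonal form of $g$ almost immediately, and everything else is routine block-matrix bookkeeping.
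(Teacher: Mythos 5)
Your proof is correct, and the computations check out (including the dimension count: with $s=s_1+s_2$, $s^2+\tfrac{(k+1-2s)(k-2s)}{2}=3s^2-(2k+1)s+\binom{k+1}{2}$), but you take a genuinely different route from the paper. The paper reuses the mechanism of Lemma \ref{restriction-r2}: it decomposes $F$ into its unique symmetric and skew-symmetric parts $F=F_++F_-$, notes that any isometry of $F$ must preserve each part separately, and then extracts the block form of $g$ by direct computation, first splitting off the orthogonal block $g_o$ and then analyzing the remaining $2s\times 2s$ piece. You instead observe that any isometry of $F$ commutes with the cosquare $H=F^{-1}F^{\mathsmaller T}$ --- which is precisely the matrix of the projective transformation $G$ of Definition \ref{g} --- and that for the form (\ref{newF}) this cosquare is the diagonal matrix $e^{i\theta}I_s\oplus e^{-i\theta}I_s\oplus I_{k+1-2s}$ with three pairwise distinct eigenvalues (since $\theta=2\pi/3$), so commutation forces the full block-diagonal shape of $g$ in one step; the isometry condition then reduces, exactly as you say, to $g_o^{\mathsmaller T}g_o=I$ and the single equation $h_1^{\mathsmaller T}Wh_2=W$. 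Your approach is arguably more systematic and scales better: the same eigenvalue argument applies verbatim whenever the angles $\theta_j$ in Lemma \ref{classify} are distinct, so it points toward the general case $r>3$ that the paper leaves open, and it explains structurally why the diagonal blocks of the condition are vacuous and the two off-diagonal blocks coincide. What the paper's route buys is uniformity with Lemma \ref{restriction-r2} and no need to invert $F$.
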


\begin{proof}
Decompose $F$ into its unique sum of symmetric and skew-symmetric factors: $F=F_+ +F_-$. If $g$ preserves $F$, then it simultaneously preserves $F_+$ and $F_-$. Straightforward computations show that $g=h \bigoplus g_0$, where $h$ preserves $\begin{pmatrix}
0 & W \\ e^{i\theta}W^T & 0
\end{pmatrix}$, and $g_0 \in O(k+1-2(s_1+s_2), \mathbb{C})$. \\

Restricting now the computations for the linear transformations that preserve $\begin{pmatrix}
0 & W \\ e^{i\theta}W^T & 0
\end{pmatrix}$, one finds that $h = \begin{pmatrix}
h_1 & 0 \\ 0 & h_2 
\end{pmatrix}$, where $h_1$, $h_2$ satisfy $h_1^{\mathsmaller T}W h_2 = W$.

\end{proof}

Theorem \ref{r=3} follows then from Propositions \ref{restriction-r3} and \ref{freedom}.


\section{Gale transform in ${\cal M}_{m,n,k}$}\label{gale-transform}

Gale transform in its most classic version is an involution on sets of points in projective space, which arises from the duality between the Grassmanians $\text{Gr}(k+1,n)$ and $\text{Gr}(n-k-1,n)$. In \cite{projective-gale}, D. Eisenbud and S. Popescu recap the evolution of the Gale transform since Coble studied the transformation in full generality back in the earlier 20th century, up to its modern view in Algebraic Geometry. In \cite{gale}, S. Morier-Genoud, V. Ovsienko, R. Schwartz, and S. Tabachnikov use the connections between linear difference equations, Frieze patterns, and spaces of polygons to induce a Gale transform between polygons.\\

Providing a detailed definition of the Gale transform for polygons is out of this paper's scope. For the reader without previous knowledge about the Gale transform, we recommend \cite{gale}. In the meantime, it will be enough to know that, if ${\cal C}_{n,k}$ denotes the moduli space\footnote[5]{Two polygons $P=(A_1,\ldots, A_{2n-1})$, $P'=(A'_1,\ldots, A'_{2n-1})$ are equivalent if there is a projective transformation $\varphi$ such that $\varphi(A_i)=A'_i$ for all $i$.} of $n$-gons in $\mathbb{P}^k$ with $(n,k)=1$, then the \emph{\textbf{Gale transform}} is an isomorphism between ${\cal C}_{n,k}$ and ${\cal C}_{n, n-k-2}$ that satisfies the following property:

\begin{lem}\label{translation}
Let $P$ be an $n$-gon in $\mathbb{CP}^k$, with $(n,k)=1$, and let $w:=n-k-2 \geq 1$. If ${\cal G}(P)$ is the Gale transform of $P$ (which is an $n$-gon in $\mathbb{CP}^w$), then

$$P  \text{ is } m\text{-self-dual } \Leftrightarrow {\cal G}(P) \text{ is } (m-n)\text{-self-dual} \Leftrightarrow {\cal G}(P) \text{ is } (m+n)\text{-self-dual}.$$

That is, the Gale transform provides a bijective correspondence between ${\cal M}_{m, n, k}$ and ${\cal M}_{(m-n), n, w}= {\cal M}_{(m+n), n, w}$.
\end{lem}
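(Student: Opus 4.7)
The second equivalence is automatic: since all indices of an $n$-gon are taken modulo $2n$, the conditions of $(m-n)$- and $(m+n)$-self-duality on $\mathcal{G}(P)$ coincide. The substance of the lemma is the first equivalence, which I would establish by unwinding the Gale transform at the level of lifted vertices. Fix lifts $a_i \in \mathbb{C}^{k+1}$ of the vertices of $P$ arranged as the columns of a $(k+1)\times n$ matrix $M$, and let $c_i \in \mathbb{C}^{w+1}$ be lifts of $\mathcal{G}(P)$ obtained from a basis of the $(w+1)$-dimensional right null space of $M$, so that
\[
\sum_{i} a_i \otimes c_i \;=\; 0 \quad \text{in } \mathbb{C}^{k+1}\otimes \mathbb{C}^{w+1}.
\]
The lift $b_{i+k-1} \in (\mathbb{C}^{k+1})^*$ of the dual vertex $B^*_{i+k-1}$ is determined, up to scalar, by the $k$ conditions $b_{i+k-1}(a_j)=0$ for $j \in \{i, i+2, \ldots, i+2(k-1)\}$, and analogously the vertices of $\mathcal{G}(P)^*$ are characterized by $w$ consecutive annihilation conditions on the $c_j$'s.

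The key step is to pair $b_{i+k-1}$ into the tensor identity to obtain
\[
\sum_{j \notin \{i,\, i+2,\, \ldots,\, i+2(k-1)\}} b_{i+k-1}(a_j)\, c_j \;=\; 0,
\]
which is a linear relation among the $w+2$ cyclically consecutive vectors $c_{i+2k}, c_{i+2k+2}, \ldots, c_{i-2}$. Up to normalization this is the unique such relation, so the coefficients $b_{i+k-1}(a_j)$ precisely encode the dual vertex of $\mathcal{G}(P)^*$ associated to the complementary window $\{i+2k,\ldots,i-2\}$. Tracking this complementation through the indexing should produce an intertwining of the form $\mathcal{G}(P^*) \cong \sigma^{-n}\bigl(\mathcal{G}(P)^*\bigr)$, where $\sigma$ denotes cyclic shift of vertex indices. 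Since $P$ is $m$-self-dual iff $P \cong \sigma^m(P^*)$ as labelled polygons and $\mathcal{G}$ commutes with $\sigma$, applying $\mathcal{G}$ to this relation yields $\mathcal{G}(P) \cong \sigma^{m-n}\bigl(\mathcal{G}(P)^*\bigr)$, i.e.\ $\mathcal{G}(P)$ is $(m-n)$-self-dual; the converse follows because $\mathcal{G}$ is an involution.

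The main obstacle is pinning down the precise shift value $-n$ in the intertwining formula, rather than some other multiple of $n$ or a shift by a different constant. This requires careful bookkeeping of the cyclic indexing on both sides of the Gale correspondence: one must verify that the natural cyclic structure on the columns of the null-space matrix of $M$ lines up with the labelling convention for $\mathcal{G}(P)$ used in \cite{gale}, and that the complementation $\{i, i+2, \ldots, i+2(k-1)\} \mapsto \{i+2k, \ldots, i-2\}$ produces a dual-vertex index displaced from $i+k-1$ by exactly $n$ (modulo $2n$).
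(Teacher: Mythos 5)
First, a point of reference: the paper itself does not prove Lemma \ref{translation} --- it is stated as a property of the Gale transform and the reader is sent to \cite{gale} --- so there is no in-paper argument to compare yours against. Your disposal of the second equivalence is correct (indices are taken modulo $2n$, and $(m+n)-(m-n)=2n$), and the skeleton of your plan for the first equivalence is sensible: establish an intertwining $\mathcal{G}(P^*)\cong\sigma^{\pm n}\bigl(\mathcal{G}(P)^*\bigr)$ from the defining identity $\sum_i a_i\otimes c_i=0$, then transport $P\cong\sigma^m(P^*)$ through $\mathcal{G}$ using its equivariance under the index shift.

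The gap is in the key step, and it goes beyond the index bookkeeping you flag at the end. Pairing $b_{i+k-1}$ into the tensor identity does produce the unique (up to scale) linear relation among the $w+2$ cyclically consecutive vectors $c_{i+2k},\ldots,c_{i-2}$, with coefficients $b_{i+k-1}(a_j)$. But that relation is a vector of $w+2$ coefficients --- an element of the one-dimensional kernel of a $(w+1)\times(w+2)$ matrix --- whereas a vertex of $\mathcal{G}(P)^*$ is a hyperplane of $\mathbb{P}^w$, i.e.\ a functional on $\mathbb{C}^{w+1}$ annihilating $w$ (not $w+2$) consecutive $c_j$'s. These are objects of different types, so the assertion that the coefficients ``precisely encode the dual vertex of $\mathcal{G}(P)^*$'' does not hold as stated; note also that the complement of a $k$-window has $n-k=w+2$ indices while the self-duality condition for $\mathcal{G}(P)$ involves $w$-windows, so the correspondence cannot be pure complementation. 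What your pairing actually shows is that $\xi\mapsto(\xi(a_j))_j$ identifies $(\mathbb{C}^{k+1})^*$ with the space of linear relations among the $c_j$, carrying $b_\ell$ to the minimal relation centered at $\ell+n$; this realizes $P^*$ itself (a polygon in a $k$-dimensional projective space) inside the relation space of $\mathcal{G}(P)$, not $\mathcal{G}(P^*)$ or $\mathcal{G}(P)^*$. To close the argument you still need the bridge to the dual of $\mathcal{G}(P)$: for instance, run the mirror computation (pair the dual vertices of $\mathcal{G}(P)$ into the same identity to get the minimal relations among the $a_j$) and match the two resulting correspondences, or show directly that a nondegenerate bilinear form $F$ on $\mathbb{C}^{k+1}$ with $F(a_i,a_j)=0$ for $j-i$ in the $k$-window about $m$ induces a nondegenerate form on $\mathbb{C}^{w+1}$ vanishing on the $w$-window about $m\pm n$. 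That implication, together with the exact value of the shift, is the real content of the lemma and is still missing from the plan.
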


From \cite{selfdual}, we know all pentagons are $5$-self-dual in $\mathbb{RP}^2$. This lemma provides a generalization of this fact for higher dimensions:

\begin{proof} (Of Theorem \ref{k+3})
If $P$ is a $(k+3)$-gons in $\mathbb{CP}^k$, then ${\cal G}(P)$ is a $(k+3)$-gon in $\mathbb{CP}^1$. All polygons are $0$-self-dual in $\mathbb{CP}^1$ (since the duality works as the projective transformation that sends $P$ to $P^*$). By Lemma \ref{translation}, $P$ is then $(k+3)$-self-dual.
\end{proof}

Applying Lemma \ref{translation} to Theorem \ref{m=n}, one gets a proof for Theorem \ref{m=0}. As promised in the introduction, here is an alternative proof of Theorem \ref{m=0} using a constructive approach, just like in the previous sections.

\begin{proof}(of Theorem \ref{m=0})
By equation \ref{perpendicular} from Lemma \ref{rotation}, if $m=0$ then F($A_i$, $A_i$)=0, for every $i$. This means the bilinear form $F$ must be completely symplectic, that is,
$$F= \Omega_{k+1} = \begin{pmatrix}
0 & I \\
-I & 0
\end{pmatrix} \text{ where } I \text{ is the identity matrix of size } \dfrac{k+1}{2}\times \dfrac{k+1}{2}.$$

As usual, 

$$\dim {\cal M}_{0,n,k} = \substack{\text{Degrees of freedom} \\ \text{we have to choose} \\ A_1, \ldots , A_{2n-1}} - \substack{\text{Dimension of group of} \\ \text{linear transformations}\\ \text{that preserve } \Omega_{k+1}.  }$$

By Definition \ref{drama-defi}, $D(0,n,k)=1$. Using the same argument from the proof of Proposition \ref{freedom}, one gets that:

$$\substack{\text{Degrees of freedom we have to choose} \\ A_1, \ldots , A_{2n-1}} = \dfrac{n(k+1)}{2}.$$

And from Lemma \ref{restriction-r2}, 

$$\substack{\text{Dimension of group of linear transformations}\\ \text{that preserve } \Omega_{k+1}  } = \dfrac{(k+1)(k+2)}{2}.$$

Thus,

$$\dim {\cal M}_{0,n,k} = \dfrac{n(k+1)}{2} - \dfrac{(k+1)(k+2)}{2} = \dfrac{(k+1)(n-k-2)}{2}.$$

\end{proof}

Consider, for instance, the Theorem \ref{Fuchs-Tabach} by Fuchs and Tabachnikov in \cite{selfdual} mentioned in the introduction of this paper. The Gale transform allows us to rephrase this theorem for $(k+4)$-gons.

\begin{thm} Let $k$ be odd. If $(m,k+4)=1$, then $ {\cal M}_{m,k+4,k}$ consists of the class of regular $(k+4)$-gons. If $(m,k+4)>1$ and $m< 2(k+4)$ then $\dim {\cal M}_{m, k+4, k}= (m,k+4)-1$. Finally, $\dim {\cal M}_{0,k+4, k}= k+1$.\footnote[7]{In order to guarantee $(k+4,k)=1$, we require $k$ to be odd (and therefore $n=k+4$ is also odd). For that reason, the case $n=2m$ doesn't occur.}.
\end{thm}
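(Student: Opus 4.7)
The plan is to combine Lemma \ref{translation} (the Gale transform correspondence) with Fuchs--Tabachnikov's Theorem \ref{Fuchs-Tabach}, reducing every case to the already classified two-dimensional setting. Before invoking the Gale transform, I would check the arithmetic prerequisites: since $k$ is odd, $\gcd(k+4,k)=\gcd(4,k)=1$, so Lemma \ref{translation} applies, and the target dimension is $w=(k+4)-k-2=2$. Moreover, $n=k+4$ is itself odd, which is precisely what prevents the exceptional case $n=2m'$ from appearing in the image of the Gale transform (this is the point of the footnote).

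Lemma \ref{translation} gives a bijection ${\cal M}_{m,k+4,k}\cong {\cal M}_{m',k+4,2}$ for any $m'\equiv m\pmod{k+4}$, because the lemma identifies ${\cal M}_{m-n,n,w}={\cal M}_{m+n,n,w}$. I would now split into the three cases of the statement and, in each, pick the representative $m'$ that lands inside the regime covered by Theorem \ref{Fuchs-Tabach}.

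If $(m,k+4)=1$, then any representative satisfies $(m',k+4)=1$, so the first clause of Theorem \ref{Fuchs-Tabach} forces ${\cal M}_{m',k+4,2}$ to be the single class of the regular $(k+4)$-gon; equivalently this is Theorem \ref{(m,n)=1}. If $(m,k+4)>1$ and $m<2(k+4)$, choose $m'\in\{m,\,m-(k+4)\}$ with $m'\leq k+4$; then $(m',k+4)=(m,k+4)>1$, and since $k+4$ is odd the condition $k+4\neq 2m'$ is automatic, so the second clause of Theorem \ref{Fuchs-Tabach} yields
\[
\dim {\cal M}_{m,k+4,k}=\dim {\cal M}_{m',k+4,2}=(m',k+4)-1=(m,k+4)-1.
\]
Finally, for $m=0$ take the representative $m'=k+4$; the third clause of Theorem \ref{Fuchs-Tabach} gives $\dim {\cal M}_{k+4,k+4,2}=(k+4)-3=k+1$, as required (and consistent with the separate proof of Theorem \ref{m=0} in the previous section).

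The argument has essentially no conceptual difficulty once Lemma \ref{translation} is in hand; the only delicate point, and hence the main obstacle, is the bookkeeping of choosing a representative $m'$ modulo $n=k+4$ that lies in the range $\{1,\dots,k+4\}$ used by Theorem \ref{Fuchs-Tabach}, and verifying in each case that $k$ odd forestalls the exceptional regime $n=2m'$. All the heavy lifting was already carried out in Section \ref{gale-transform} when establishing the translation property of the Gale transform.
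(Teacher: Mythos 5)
Your overall strategy --- reduce to $\mathbb{P}^2$ via Lemma \ref{translation} and quote Theorem \ref{Fuchs-Tabach} --- is exactly how the paper obtains this theorem (the paper presents it as an immediate consequence of the Gale transform and records no further details). Your handling of the cases $(m,k+4)=1$, $m=0$, and $k+4<m<2(k+4)$ is fine: in the last of these the Gale representative $m-(k+4)$ lies in $(0,k+4)$, so the second clause of Theorem \ref{Fuchs-Tabach} applies verbatim, and for $m=0$ the representative $k+4$ feeds into $\dim {\cal M}_{n,n,2}=n-3=k+1$.

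The gap is in the remaining case $0<m<k+4$ with $(m,k+4)>1$, where you take $m'=m$. Lemma \ref{translation} does not give a bijection ${\cal M}_{m,k+4,k}\cong {\cal M}_{m',k+4,2}$ for every $m'\equiv m \pmod{k+4}$: it only shifts $m$ by $\pm(k+4)$, and since self-duality indices are read modulo $2n=2(k+4)$, the only reachable residue is $m'\equiv m+(k+4)\pmod{2(k+4)}$. (Indeed ``any $m'\equiv m\pmod{k+4}$'' cannot be correct, since half of those $m'$ have the wrong parity: self-duality in $\mathbb{P}^2$ requires $m'$ odd, whereas $m$ here is even.) So for $0<m<k+4$ the Gale representative is $m+(k+4)>n$, which lies outside the hypothesis $m\le n$ of the second clause of Theorem \ref{Fuchs-Tabach}. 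To close the gap you need the extra symmetry ${\cal M}_{\mu,n,\kappa}={\cal M}_{-\mu,n,\kappa}$, obtained by replacing the witnessing bilinear form $F$ by its transpose (equivalently, reversing the orientation of the polygon): the conditions $F(A_i,A_j)=0$ for $j=i+\mu\pm\ell$ become $F^{\mathsmaller T}(A_i,A_j)=0$ for $j=i-\mu\pm\ell$. This converts $m+(k+4)$ into $(k+4)-m\in(0,k+4)$, which has the same gcd with $k+4$ and satisfies $k+4\neq 2\bigl((k+4)-m\bigr)$ because $k+4$ is odd, so the second clause then yields $(m,k+4)-1$ as required. Neither the paper nor your write-up records this step, but your version substitutes a false intermediate claim for it, so it needs to be added.
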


Notice that the last line of the previous Theorem is consistent with Theorem \ref{m=0}. To have some context, recall $\dim {\cal C}_{n,k}=k(n-k-2)$. Then, the moduli space of $m$-self-dual $(k+4)$-gons in $\mathbb{CP}^{k}$ is a subset of ${\cal C}_{k+4, k}$, which has dimension $\dim {\cal C}_{k+4, k}= 2k$.\\

\section{Pentagram map in higher dimensions}\label{Pentagram-section}

The pentagram map $T$ was first introduced by Richard Schwartz in 1992 (\cite{Schwartz-pentagram}). Given an $n$-gon $P = (A_1, A_3, \ldots , A_{2n-1})$ in $\mathbb{RP}^2$, the image of $P$ under the pentagram map, $T(P)$ has vertices $(W_1, W_3, \ldots , W_{2n-1})$ where $$W_\ell = (A_\ell A_{\ell+4})\cap(A_{\ell+2} A_{\ell+6}) .$$

	\begin{center}
	\includegraphics[scale=0.7]{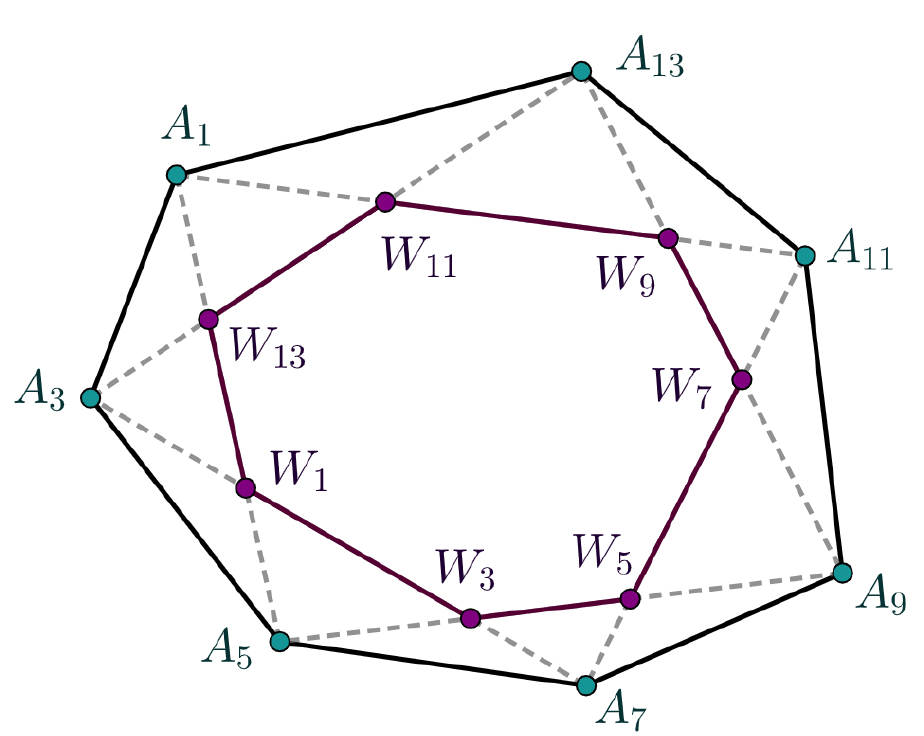}
	\figcaption{A polygon $P=(A_1, \ldots , A_{2n-1})$ and the image of $P$ under the Pentagram map, $T(P)=(W_1, \ldots , W_{2n-1})$.}
	\label{Pentagram}
	\end{center}
\vspace{0.3cm}

This map, defined in the moduli space of polygons in $\mathbb{RP}^2$, has been extensively studied since then, as it exhibits integrability behavior (see \cite{quasiperiodic}, \cite{pentagram-integrable}, \cite{soloviev-integrability}). Several papers have studied the connections between this map to Cluster algebras (\cite{glickpentagram}), Frieze patterns (\cite{friezecluster}), $T$-systems(\cite{kedem2015t}), Octahedral tilings, and Methods of Condensation (\cite{schwartzcondensation}). When we consider the continuous limit of a polygon, the limit of the pentagram map coincides with the classical Boussinesq equation, which is a classic example of an integrable partial differential equation (\cite{pentagram-integrable}). The study of the pentagram map has been extended to twisted, corrugated and dented polygons (\cite{higher-pentagram}, \cite{dented}, \cite{ovsienko-pentagram}, \cite{pentagram-integrals}). \\

Gloria Mar\'i Beffa, Boris Khesin, and Fedor Soloviev have found generalizations of this map to higher dimensions (\cite{beffa-pentagram}, \cite{khesin-pentagram}, \cite{dented}). In $\mathbb{RP}^k$, there are multiple ``Pentagram maps'': given two multi-index $I$, $J$, there is a pentagram map $T_{I,J}$. The $I$ multi-index governs the structure taken to construct the hyperdiagonals: the subindices $i_\ell$ indicate how many vertices to skip (see Figure \ref{hyperdiagonal}). The multi-index $J$ controls which hyperdiagonals to intersect in order to produce a vertex. More concretely:

\begin{defi} Let $I = (i_1, i_2, \ldots, i_{k-1})$ and $J = (j_1, j_2, \ldots, j_{k-1})$ be two $(k-1)$-tuples of natural numbers. If $P=(A_1, A_3, \ldots, A_{2n-1})$ is an $n$-gon in $\mathbb{RP}^k$, then a \emph{$I$-diagonal hyperplane} $D_\ell$ is the hyperplane:

$$D_\ell := \text{span}\{A_\ell, A_{\ell+2i_1}, A_{\ell+2(i_1+i_2)}, \ldots , A_{\ell+2(i_1+i_2+ \ldots +i_{k-1})}\}.$$

Then the image of $P$ under the \emph{\textbf{Generalized Pentagram Map}} $\boldsymbol{T_{I,J}}$ is the polygon with vertices $W_1, W_3, \ldots, W_{2n-1}$ where

$$W_\ell := D_\ell \cap D_{\ell+ 2j_1} \cap D_{\ell+2(j_1 + j_2)} \cap \ldots \cap D_{\ell+2(j_1+j_2+\ldots +j_{k-1})}.$$
\end{defi}

	\begin{figure}
	\centering
	\includegraphics[scale=0.7]{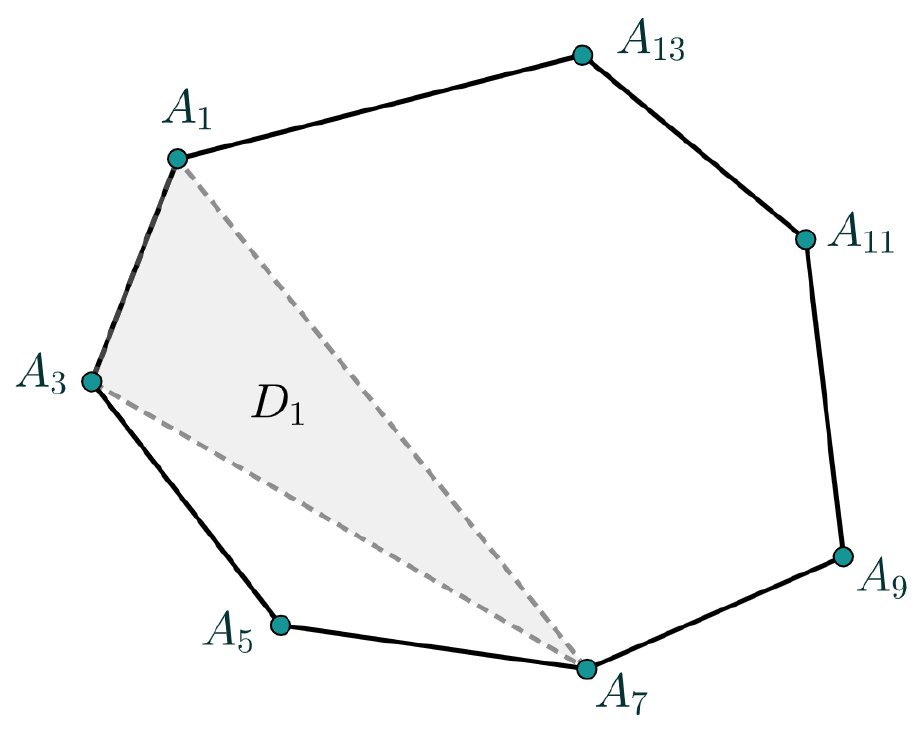}
	\caption{The following picture shows the $I$-diagonal $D_1$, for $I=(1,2)$.}
	\label{hyperdiagonal}
	\end{figure}
For instance, the classic pentagram map corresponds to $T_{(2),(1)}$ (skip one vertex, and take the intersection of two consecutive diagonals) (See Figure \ref{Pentagram}).\\

Clebsch's Theorem (\cite{clebsch}) says that pentagons are invariant under the pentagram map: that is, for any pentagon $P$ in $\mathbb{RP}^2$ there is a projective transformation $\varphi$ such that $\varphi(T(P)) = P$. One can check using computations, that a similar phenomenon occurs in $\mathbb{RP}^3$.

\begin{prop}
Given any hexagon $H$ in $\mathbb{RP}^3$, there is a projective transformation $\varphi$ such that $\varphi(T_{(1,2),(1,1)}(H)) = H$. \\
\end{prop}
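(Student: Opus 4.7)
The plan is to reduce the proposition to a finite, explicit computation, exploiting the low dimension of the moduli space of hexagons in $\mathbb{RP}^3$.

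Using the formula $\dim{\cal C}_{6,3}=3(6-3-2)=3$ together with the $5$-transitivity of $\text{PGL}(4,\mathbb{R})$ on points in general position in $\mathbb{RP}^3$, every generic hexagon $H=(A_1,A_3,\ldots,A_{11})$ is projectively equivalent to one in which
$$
A_1=(1{:}0{:}0{:}0),\ A_3=(0{:}1{:}0{:}0),\ A_5=(0{:}0{:}1{:}0),\ A_7=(0{:}0{:}0{:}1),\ A_9=(1{:}1{:}1{:}1),
$$
while the remaining vertex $A_{11}=(a{:}b{:}c{:}d)$ carries the three essential moduli. It therefore suffices to check, on an open subset of the $(a,b,c,d)$-parameter space, that $T_{(1,2),(1,1)}(H)$ is projectively equivalent to $H$.

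First I would compute the six diagonal planes $D_\ell=\text{span}\{A_\ell,A_{\ell+2},A_{\ell+6}\}$ for $\ell\in\{1,3,5,7,9,11\}$ (indices read modulo $12$), each represented as a single linear form on $\mathbb{R}^4$ obtained from the wedge of its three spanning vectors. Next, for each $\ell$ I would solve the $3\times 4$ linear system encoding $W_\ell\in D_\ell\cap D_{\ell+2}\cap D_{\ell+4}$, so that the six new vertices $W_\ell$ appear as explicit rational functions of $(a,b,c,d)$.

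To exhibit the desired $\varphi$, I would invoke $5$-transitivity once more: there is a unique $\varphi\in\text{PGL}(4,\mathbb{R})$ sending the ordered quintuple $(W_1,W_3,W_5,W_7,W_9)$ to $(A_1,A_3,A_5,A_7,A_9)$, and the content of the proposition is precisely that this same $\varphi$ also carries $W_{11}$ to a scalar multiple of $A_{11}$. This reduces the entire claim to a single polynomial identity in $(a,b,c,d)$, which I expect to be the main obstacle: the verification is mechanical but algebraically dense, and is naturally handled by a symbolic computer algebra system rather than by hand. A more conceptual alternative would be to isolate three algebraically independent projective invariants of hexagons in $\mathbb{RP}^3$ coordinatizing ${\cal C}_{6,3}$ and check invariance of each under $T_{(1,2),(1,1)}$, but this seems to repackage the same polynomial computation without genuine simplification.
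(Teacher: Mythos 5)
Your reduction is correct and is essentially the same approach the paper takes: the paper offers no written argument beyond asserting that the invariance ``can be checked using computations,'' and your normalization of $A_1,\ldots,A_9$ to the standard frame via $5$-transitivity, the identification of the diagonal planes $D_\ell=\mathrm{span}\{A_\ell,A_{\ell+2},A_{\ell+6}\}$ and vertices $W_\ell=D_\ell\cap D_{\ell+2}\cap D_{\ell+4}$, and the final check that the unique $\varphi$ matching five vertices also carries $W_{11}$ to $A_{11}$ is exactly the computation being alluded to. Note only that, like the paper, you stop short of actually executing the symbolic verification, so the polynomial identity in $(a,b,c,d)$ still needs to be carried out to complete the proof.
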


Computer-based experiments suggest that for any $k\geq 2 \in \mathbb{N}$, and any $(k+3)$-gon $P$ in $\mathbb{RP}^{k}$, there are multi-indices $I, J$ and projective transformation $\varphi$ such that $\varphi(T_{I,J}(P)) = P$

\begin{conj}
Let $P$ be a $(k+3)$-gon in $\mathbb{RP}^k$, and $J=(1, \ldots, 1)$.
\begin{enumerate}
	\item If $k$ is even number and $I=(1, \ldots, 1, 2, 1, \ldots , 1)$ (same number of $1$'s at the beginning and at the end of the multi-index), or
	\item If $k$ is odd and $I = (1, 1, \ldots, 1, 2, 1, \ldots, 1)$ (one more $1$'s at the beginning of the multi-index than at the end)
\end{enumerate}
then $P$ and $T_{I,J}(P)$ are projectively equivalent.
\end{conj}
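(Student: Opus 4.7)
The plan is to follow the strategy used to prove Theorem \ref{k+3}: apply the Gale transform to reduce the problem to a computation on $\mathbb{P}^1$. Since the moduli space $\mathcal{C}_{k+3,k}$ is isomorphic via Gale to $\mathcal{C}_{k+3,1}$ (ordered $(k+3)$-tuples of points on a line, modulo $\mathrm{PGL}(2)$), the generalized pentagram map $T_{I,J}$ descends to a self-map of $\mathcal{C}_{k+3,1}$, and the conjecture is equivalent to showing this induced map is the identity.

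First, I would interpret each $I$-diagonal $D_\ell$ Gale-theoretically. Since $D_\ell$ is a hyperplane spanned by $k$ of the $k+3$ vertices of $P$, it is determined by the complementary triple of omitted vertices. For the choice of $I$ in the conjecture, this triple is $\{A_{\ell-2},\, A_{\ell+2\beta},\, A_{\ell+2(k+1)}\}$, where $\beta$ is the position of the unique entry $2$ in $I$ (so $\beta=k/2$ for $k$ even and $\beta=(k+1)/2$ for $k$ odd). Under Gale, this triple corresponds to a triple of points on $\mathbb{P}^1$, and the vertex $W_\ell=\bigcap_{j=0}^{k-1}D_{\ell+2j}$ of $T_{I,J}(P)$ acquires its own Gale coordinate in $\mathbb{P}^1$ as a rational expression in the $k+3$ original coordinates, obtained by solving the $k$ linear incidence conditions $W_\ell\in D_{\ell+2j}$.

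The core of the argument is then a cross-ratio identity on $\mathbb{P}^1$ showing that the $k+3$ new coordinates (one for each $W_\ell$) form a configuration $\mathrm{PGL}(2)$-equivalent to the original. I expect that the specific symmetric placement of the dent in $I$---central for $k$ even, off by one for $k$ odd---is exactly what makes the identity hold, and this also explains the parity split in the statement. A secondary tool that may streamline this is the self-duality from Theorem \ref{k+3}: the projective transformation $\hat{f}$ with $\hat{f}(A_i)=B_{i+k+3}^*$ lets one recast each $D_\ell$ as an intersection involving dual vertices, possibly identifying $T_{I,J}(P)$ with a shifted double dual of $P$, which by self-duality is projectively equivalent to $P$.

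The main obstacle will be making the Gale-theoretic translation explicit enough to verify the required cross-ratio identity: computing the Gale coordinate of $W_\ell$ amounts to solving a linear system whose entries are rational in the original Gale coordinates, and checking equivalence with the original configuration requires careful combinatorial bookkeeping over the $k$ shifted diagonals and their omitted triples. A fallback would be an induction on $k$ anchored at $k=2$ (Clebsch's theorem) and $k=3$ (the preceding proposition), though it is not clear how to relate $T_{I,J}$ naturally across dimensions, so I would attempt the Gale approach first.
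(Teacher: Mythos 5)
The statement you are addressing is a \emph{conjecture}: the paper offers no proof of it, only computational verification up to dimension $k=200$. So there is no argument in the paper to compare yours against, and the relevant question is whether your proposal actually closes the gap. It does not. What you have written is a plausible strategy, not a proof: the entire content of the claim is concentrated in the step you label ``the core of the argument,'' namely the cross-ratio identity on $\mathbb{P}^1$ showing that the Gale coordinates of $W_1,\ldots,W_{2n-1}$ form a configuration $\mathrm{PGL}(2)$-equivalent to the original. You never state this identity, let alone verify it, and you acknowledge as much in your final paragraph. Your combinatorial bookkeeping for the omitted triple $\{A_{\ell-2},\,A_{\ell+2\beta},\,A_{\ell+2(k+1)}\}$ is correct, and the observation that the placement of the dent in $I$ should be what makes the identity hold is a sensible heuristic, but a heuristic is all it is: nothing in the proposal explains \emph{why} the centered (resp.\ off-center) dent produces the identity map on $\mathcal{C}_{k+3,1}$ rather than some other projectively nontrivial map, and that is precisely the open problem.

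Two further cautions. First, the Gale transform as stated in Lemma \ref{translation} requires $(n,k)=1$; for $n=k+3$ this is $\gcd(3,k)=1$, which fails whenever $3\mid k$, so your reduction to $\mathbb{P}^1$ needs a separate justification (or a more general version of the Gale transform) in those cases. Second, the remark that $T_{I,J}$ ``descends'' to a self-map of $\mathcal{C}_{k+3,1}$ is vacuously true for any self-map of the moduli space, since Gale is an isomorphism; the difficulty is entirely in computing what the conjugated map is, which is where the proposal stops. The fallback induction on $k$ is, as you note, not workable without a mechanism relating $T_{I,J}$ across dimensions, and none is offered. In short: the approach is a reasonable research plan consistent with the paper's use of the Gale transform for Theorem \ref{k+3}, but the statement remains unproven both in the paper and in your proposal.
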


While there is no proof of this conjecture, there are computational results that support this for up to dimension $k=200$.

\section*{Acknowledgments} 
The author would like to thank Sergei Tabachnikov for all the fruitful discussion and valuable feedback. Many thanks to Jack Huizenga for sharing the paper by R.A. Horn and V.V. Sergeichuk \cite{canonical}, which was crucial for understanding the bilinear forms associated with dual polygons. Thanks to Gil Bor for the stimulating discussions.  This research is supported by the U.S. National Science Foundation through the grant DMS-2005444.

\bibliographystyle{abbrv}
\bibliography{mselfdual_polygons}

\begin{thebibliography}{10}

\bibitem{arnold}
V.~I. Arnold.
\newblock {\em Arnold's problems}.
\newblock Springer Science \& Business Media, 2004.

\bibitem{beffa-pentagram}
G.~M. Beffa.
\newblock On generalizations of the pentagram map: discretizations of {AGD}
  flows.
\newblock {\em Journal of nonlinear science}, 23(2):303--334, 2013.

\bibitem{clebsch}
A.~Clebsch.
\newblock Ueber das ebene f{\"u}nfeck.
\newblock {\em Mathematische Annalen}, 4(3):476--489, 1871.

\bibitem{coxeter}
H.~Coxeter.
\newblock {\em Regular complex polytopes}.
\newblock London: Cambridge University Press, 1974.

\bibitem{projective-gale}
D.~Eisenbud and S.~Popescu.
\newblock The projective geometry of the {G}ale transform.
\newblock {\em Journal of algebra}, 230(1):127--173, 2000.

\bibitem{selfdual}
D.~Fuchs and S.~Tabachnikov.
\newblock Self-dual polygons and self-dual curves.
\newblock {\em Functional analysis and other mathematics}, 2(2):203--220, 2009.

\bibitem{gale-original}
D.~Gale.
\newblock 15. neighboring vertices on a convex polyhedron.
\newblock In {\em Linear Inequalities and Related Systems.(AM-38), Volume 38},
  pages 255--264. Princeton University Press, 2016.

\bibitem{higher-pentagram}
M.~Gekhtman, M.~Shapiro, S.~Tabachnikov, and A.~Vainshtein.
\newblock Integrable cluster dynamics of directed networks and pentagram maps.
\newblock {\em Advances in Mathematics}, 300:390--450, 2016.

\bibitem{geometry-grassmannians}
I.~Gelfand and R.~MacPherson.
\newblock Geometry in {G}rassmannians and a generalization of the dilogarithm.
\newblock {\em Advances in mathematics}, 44(3):279--312, 1982.

\bibitem{glickpentagram}
M.~Glick.
\newblock The pentagram map and {Y}-patterns.
\newblock {\em Advances in Mathematics}, 227(2):1019--1045, 2011.

\bibitem{canonical}
R.~A. Horn and V.~V. Sergeichuk.
\newblock {C}anonical forms for complex matrix congruence and∗ congruence.
\newblock {\em Linear algebra and its applications}, 416(2-3):1010--1032, 2006.

\bibitem{chow-quotients}
M.~M. Kapranov.
\newblock Chow quotients of {G}rassmannians. i.
\newblock {\em Adv. Soviet Math}, 16(2):29--110, 1993.

\bibitem{kedem2015t}
R.~Kedem and P.~Vichitkunakorn.
\newblock T-systems and the pentagram map.
\newblock {\em Journal of Geometry and Physics}, 87:233--247, 2015.

\bibitem{khesin-pentagram}
B.~Khesin and F.~Soloviev.
\newblock The pentagram map in higher dimensions and {K}d{V} flows.
\newblock {\em Electronic Research Announcements}, 19:86, 2012.

\bibitem{dented}
B.~Khesin and F.~Soloviev.
\newblock The geometry of dented pentagram maps.
\newblock {\em Journal of the European Mathematical Society}, 18(1):147--179,
  2015.

\bibitem{gale}
S.~Morier-Genoud, V.~Ovsienko, R.~E. Schwartz, and S.~Tabachnikov.
\newblock Linear difference equations, frieze patterns, and the combinatorial
  {G}ale transform.
\newblock In {\em Forum of Mathematics, Sigma}, volume~2. Cambridge University
  Press, 2014.

\bibitem{friezecluster}
S.~Morier-Genoud, V.~Ovsienko, and S.~Tabachnikov.
\newblock 2-frieze patterns and the cluster structure of the space of polygons.
\newblock In {\em Annales de l'Institut Fourier}, volume~62, pages 937--987,
  2012.

\bibitem{quasiperiodic}
V.~Ovsienko, R.~Schwartz, and S.~Tabachnikov.
\newblock Quasiperiodic motion for the pentagram map.
\newblock {\em Electronic Research Announcements}, 16:1, 2009.

\bibitem{pentagram-integrable}
V.~Ovsienko, R.~Schwartz, and S.~Tabachnikov.
\newblock The pentagram map: a discrete integrable system.
\newblock {\em Communications in Mathematical Physics}, 299(2):409--446, 2010.

\bibitem{ovsienko-pentagram}
V.~Ovsienko, R.~Schwartz, and S.~Tabachnikov.
\newblock The pentagram map: a discrete integrable system.
\newblock {\em Communications in Mathematical Physics}, 299(2):409--446, 2010.

\bibitem{Schwartz-pentagram}
R.~Schwartz.
\newblock The pentagram map.
\newblock {\em Experimental Mathematics}, 1(1):71--81, 1992.

\bibitem{schwartzcondensation}
R.~E. Schwartz.
\newblock Discrete monodromy, pentagrams, and the method of condensation.
\newblock {\em Journal of Fixed Point Theory and Applications}, 3(2):379--409,
  2008.

\bibitem{pentagram-integrals}
R.~E. Schwartz.
\newblock The pentagram integrals for {P}oncelet families.
\newblock {\em Journal of Geometry and Physics}, 87:432--449, 2015.

\bibitem{soloviev-integrability}
F.~Soloviev.
\newblock Integrability of the pentagram map.
\newblock {\em Duke Mathematical Journal}, 162(15):2815--2853, 2013.

\end{thebibliography}

\end{document}